\def\jb#1{\langle#1\rangle}
\def\norm#1{\|#1\|}
\def\normo#1{\left\|#1\right\|}
\newcommand{\les}{{\lesssim}}
\newcommand{\ges}{{\gtrsim}}
\newcommand{\F}{\mathcal{F}}
\newcommand{\cP}{\mathcal{P}}
\newcommand{\cU}{\mathcal{U}}
\newcommand{\cN}{\mathcal{N}}
\newcommand{\R}{\mathbb{R}}
\newcommand{\Z}{\mathbb{Z}}
\newcommand{\al}{\alpha}
\newcommand{\e}{\varepsilon}
\newcommand{\om}{\omega}
\newcommand{\x}{\xi}
\newcommand{\y}{\eta}
\newcommand{\ro}{\rho}
\newcommand{\ft}{{\mathcal{F}}}
\newcommand{\De}{\Delta}
\newcommand{\Om}{\Omega}
\newcommand{\p}{\partial}
\newcommand{\na}{\nabla}
\newcommand{\re}{\mathop{\mathrm{Re}}}
\newcommand{\lec}{\lesssim}
\newcommand{\I}{\infty}
\newcommand{\ti}{\widetilde}
\newcommand{\LR}[1]{{\langle #1 \rangle}}
\newcommand{\EQ}[1]{\begin{equation}\begin{split} #1 \end{split}\end{equation}}
\newcommand{\Del}[1]{}
\newcommand{\CAS}[1]{\begin{cases} #1 \end{cases}}
\newcommand{\pt}{&}
\newcommand{\pr}{\\ &}
\newcommand{\pq}{\quad}
\numberwithin{equation}{section}
\newtheorem{thm}{Theorem}[section]
\newtheorem{lem}[thm]{Lemma}
\theoremstyle{remark}
\newtheorem{rem}{Remark}
\begin{document}

\title[small energy scattering for Klein-Gordon-Zakharov]
{Small energy scattering\\ for the Klein-Gordon-Zakharov system\\ with
radial symmetry}
\author[Z. Guo, K. Nakanishi, S. Wang]{Zihua Guo, Kenji Nakanishi, Shuxia Wang}

\address{LMAM, School of Mathematical Sciences, Peking
University, Beijing 100871, China}
\address{Beijing International Center for Mathematical Research, Beijing
100871, China} \email{zihuaguo@math.pku.edu.cn}

\address{Department of Mathematics, Kyoto University, Kyoto 606-8502,
Japan}

\email{n-kenji@math.kyoto-u.ac.jp}

\address{LMAM, School of Mathematical Sciences, Peking
University, Beijing 100871, China}

\email{wangshuxia@pku.edu.cn}

\begin{abstract}
We prove small energy scattering for the 3D Klein-Gordon-Zakharov
system with radial symmetry. The idea of proof is the same as the
Zakharov system studied in \cite{GN}, namely to combine the normal
form reduction and the radial-improved Strichartz estimates.
\end{abstract}

\maketitle

\section{Introduction}
In this paper, we consider the Cauchy problem for the 3D  Klein-Gordon-Zakharov
system \EQ{\label{eq:KGZ1}
 \CAS{ \ddot{u} - \De u +u= nu,\\
   \ddot n/\al^2 - \De n = -\De u^2,}}
with the initial data
\begin{align}
u(0,x)=u_0,\,\dot{u}(0,x)=u_1,\  n(0,x)=n_0,\,\dot n(0,x)=n_1,
\end{align}
where $(u,n)(t,x):\R^{1+3}\to\R\times\R$, and $\al>0, \al\neq 1$ denotes the
ion sound speed. It preserves  the energy \EQ{
 E=\int_{\R^3}|u|^2+|\na u|^2+|\dot{u}|^2+\frac{|D^{-1}\dot n|^2/\al^2+|n|^2}{2}-n|u|^2 dx,}
where $D:=\sqrt{-\De}$, as well as the radial symmetry
\EQ{
 (u,n)(t,x)=(u,n)(t,|x|).}
We consider those solutions with such symmetry and finite energy,
hence \EQ{\label{eq:indata}
 (u_0,u_1,n_0,n_1)\in H^1_r(\R^3)\times L^2_r(\R^3)\times L^2_r(\R^3)\times \dot H^{-1}_r(\R^3).}
We are interested in the scattering for small data in the above
function space.

This system describes the interaction between Langmuir waves and ion
sound waves in a plasma (see \cite{B}, \cite{D}). The local
well-posedness (for arbitrary initial data) and global well-posedness (for small initial data) of \eqref{eq:KGZ1} with $\al<1$ in the energy space $H^1\times
L^2$ was proved by Ozawa, Tsutaya and Tsutsumi in \cite{OTT}.
We point out that \eqref{eq:KGZ1} does not have null form
structure as in Klainerman and Machedon \cite{KM} and this suggests
that when $\alpha=1$ the system \eqref{eq:KGZ1} may be locally
ill-posed in $H^1\times L^2$ (cf. the counter example of Lindblad
\cite{L} for similar equations). Hence, we suppose $\al\neq 1$ here.
When the first equation of \eqref{eq:KGZ1} is replaced by
$c^{-2}\ddot{u} - \De u +c^2u= -nu$ and $c, \alpha\rightarrow\I$,
Masmoudi and Nakanishi studied the limit system and the behavior of
their solutions in a series of papers \cite{MN1}-\cite{MN3}. The
instability of standing wave of Klein-Gordon-Zakharov system was
studied in \cite{GGZ}, \cite{GZ} and \cite{OT}.

In this paper, inspired by \cite{GN}, we combine the normal form
technique, which was first used in a dispersive PDE context by
Shatah \cite{Shatah}, and the improved radial Strichartz estimates
to prove small energy scattering of \eqref{eq:KGZ1} with radial
symmetry.  The normal form transform was also used in \cite{OTT2} for \eqref{eq:KGZ1} and they got the scattering from
initial data small in the Sobolev spaces with high regularity and in $L^p$ with $p<2$. Moreover, their scattering result is independent of  radial symmetry.
The
main result of this paper is
\begin{thm}\label{thm}
If $(u_0,u_1,n_0,n_1)$ are all radial and small enough in the norm of
\eqref{eq:indata}, then the solution $(u,n)$ scatters in this space
as $t\to\pm\I$.
\end{thm}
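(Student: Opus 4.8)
\emph{Step 1: first-order reduction.} The plan, following \cite{GN}, is to reduce \eqref{eq:KGZ1} to a first-order system, remove the quadratic nonlinearities by a bilinear normal form, and close the remaining (mostly cubic) estimates with Strichartz estimates sharpened by the radial symmetry. With $\jb{D}=(1-\De)^{1/2}$ and $D=(-\De)^{1/2}$, put $\ti u_\pm:=u\mp i\jb{D}^{-1}\dot u$ and $\ti n_\pm:=n\mp i(\al D)^{-1}\dot n$, so that $u=(\ti u_++\ti u_-)/2$, $n=(\ti n_++\ti n_-)/2$, $\ti u_-=\ba{\ti u_+}$, $\ti n_-=\ba{\ti n_+}$, and the data \eqref{eq:indata} correspond to $\ti u_\pm(0)\in H^1_r$ and $\ti n_\pm(0)\in L^2_r$. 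Then \eqref{eq:KGZ1} takes the form
\EQ{\label{eq:fo}
 (i\p_t\pm\jb{D})\ti u_\pm=\pm\jb{D}^{-1}(nu),\pq (i\p_t\pm\al D)\ti n_\pm=\pm\al D(u^2).}
Scattering in the space of \eqref{eq:indata} is equivalent to the convergence, as $t\to\pm\I$, of $e^{\pm it\jb{D}}\ti u_\pm(t)$ in $H^1_r$ and of $e^{\pm it\al D}\ti n_\pm(t)$ in $L^2_r$; we obtain the small global solution and these limits simultaneously by showing that, after Step 2, the Duhamel terms converge in the energy norm, for which it suffices to put the transformed nonlinearities in $L^1_t$ of the energy space.

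\emph{Step 2: normal form.} Quadratic terms do not decay fast enough in $3$D to be time-integrable, so we first delete them by integrating by parts in $t$ (a bilinear normal form, in the spirit of \cite{Shatah}). In \eqref{eq:fo} the interaction of $\ti n_{\pm_1}$ with $\ti u_{\pm_2}$ feeding $\ti u_{\pm_0}$ carries the phase $\Phi=\pm_0\jb{\x_0}\mp_1\al|\x_1|\mp_2\jb{\x_2}$, $\x_0=\x_1+\x_2$, and the interaction of $\ti u_{\pm_1}$ with $\ti u_{\pm_2}$ feeding $\ti n_{\pm_0}$ carries $\pm_0\al|\x_0|\mp_1\jb{\x_1}\mp_2\jb{\x_2}$. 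Because $\jb{\x}\ge1$ and $\al\neq1$, these phases are non-degenerate whenever the three frequencies are not all comparable, and then $|\Phi|$ is of the order of the largest frequency, so division by $\Phi$ is a gain. (When $\al=1$ the phase can vanish on a much larger set, on which a null structure — which \eqref{eq:KGZ1} lacks — would be needed; this is why $\al=1$ is excluded, cf.\ \cite{L}, \cite{KM}.) Integrating by parts on this non-resonant part and using \eqref{eq:fo} to rewrite $\p_t\ti u_\pm$ and $\p_t\ti n_\pm$, we trade each such quadratic term for bilinear boundary terms carrying an extra factor $\Phi^{-1}$, estimated directly in the energy norm, and for cubic terms of the schematic shapes $\jb{D}^{-1}\Phi^{-1}(n\cdot nu)$, $\jb{D}^{-1}\Phi^{-1}((Du^2)u)$ and their $\ti n_\pm$ counterparts. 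The complementary region, where all three frequencies are comparable and which contains the genuine space-time resonances, is kept in quadratic form and handled directly with the radial-improved bilinear estimates of Step 3; there the frequency localization together with $\jb{D}^{-1}$ (for the $\ti u_\pm$ equation) and the $D$ in $Du^2$, which suppresses the output at low frequency (for the $\ti n_\pm$ equation), provides the room needed.

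\emph{Step 3: closing the estimates.} We set up a contraction in a space $X$ controlling $\ti u_\pm$ in $C_t H^1_r$, $\ti n_\pm$ in $C_t L^2_r$, and a family of Strichartz-type norms. The classical $3$D Strichartz estimates do not suffice to absorb the cubic and residual quadratic terms at the energy regularity; instead we use the radial-improved Strichartz estimates — which hold for radially symmetric data over a wider range of exponents — together with the associated weighted $L^2_{t,x}$ (local smoothing) bounds, for both the Klein--Gordon propagator $e^{\mp it\jb{D}}$ and the speed-$\al$ wave propagator $e^{\mp it\al D}$. Estimating each term from Step 2 multilinearly in $X$ then places the cubic terms in $L^1_t$ of the energy space and the boundary terms in $C_t$ of it, each with a factor of smallness, which gives the contraction on a small ball. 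Since the normal form transformation differs from the identity by a bounded bilinear operator, it is invertible near the origin, so the $X$-bound transfers to the original variables and the $L^1_t$ control yields the scattering limits.

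\emph{Main difficulty.} The crux is Step 3 for the low-frequency part of the wave field $n$: the $3$D wave flow decays only like $t^{-1}$, only $\dot H^{-1}_r$ regularity is available there, and the normal form manufactures cubic interactions involving precisely this piece, so their control depends entirely on the radial improvement of the Strichartz and smoothing estimates. Assembling that suite of estimates in exactly the form required for both propagators, and verifying that it closes the multilinear bookkeeping — including the resonant region of Step 2, which $\al\neq1$ renders tractable — is the main technical work.
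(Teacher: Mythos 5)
Your overall architecture (first-order reduction, normal form in the spirit of \cite{Shatah}, radial-improved Strichartz, contraction mapping) matches the paper's, and the observation that $\alpha\neq 1$ is what makes the phases tractable is correctly placed. However, your Step 2 contains a factual error that is precisely the key technical subtlety the paper must address.

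You claim that because $\jb{\x}\ge 1$ and $\alpha\neq 1$, ``these phases are non-degenerate whenever the three frequencies are not all comparable, and then $|\Phi|$ is of the order of the largest frequency.'' This is false. Take the resonance function $\om_1=-\jb{\x}+\alpha|\x-\y|+\jb{\y}$ associated with the term $\cN\cU$ in the high-low interaction $|\x|\sim|\x-\y|\gg|\y|$. Sending $\y\to 0$ gives $\om_1\to -\jb{\x}+\alpha|\x|+1$, which vanishes at the finite frequency $|\x|=c_\alpha:=2\alpha/|1-\alpha^2|$. So there is a genuine resonance in the high-low region even though the three frequencies are far from comparable there, and $|\om_1|$ is \emph{not} of the order of the largest frequency anywhere near that sphere (and is only $\sim|\x|$, not $\sim\jb{\x}$, even when $|\x|$ is small and away from $c_\alpha$). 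The low-high region $|\y|\gg|\x-\y|$ has the analogous degeneracy $|\om_1|\sim\alpha|\x-\y|\to 0$. The same phenomenon occurs for the dual phases $\ti\om_j$ attached to the $\cU\bar\cU$ term in the $\cN$-equation.

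If you integrate by parts on all of HL and LH, as your Step 2 proposes, the resulting bilinear multiplier $\Om$ contains $\om_1^{-1}$, which is unbounded near the sphere $|\x|=c_\alpha$ and cannot be controlled by any of the smoothing or Strichartz estimates you list; the boundary and cubic estimates simply fail. The paper's Lemma 2.1 and its surrounding discussion exist exactly to deal with this: the high-low piece is further split into $XL$ (output frequency bounded away from $c_\alpha$, where one shows $|\om_1|\sim_\alpha|\x|$ and $|\om_3|\sim_\alpha\jb{\x}$, allowing normal form) and $\alpha L$ (output frequency near $c_\alpha$, kept in quadratic form). The quadratic terms that are ultimately retained and estimated by the bilinear Strichartz lemma are therefore $HH+LH+\alpha L$ for the $\cU$-equation and $HH+\alpha L+L\alpha$ for the $\cN$-equation, not just $HH$. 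Correcting your Step 2 to include this further dyadic-in-output-frequency decomposition, and adding the corresponding bilinear bound for the $\alpha L$/$L\alpha$ piece (the paper's Lemma 3.2, third and last displays), is required before Step 3 can close.
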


The main difficulties for the proof of scattering are derivative
loss and slow dispersion of the wave equation together with the
quadratic nonlinearity. The loss of derivative can be overcome by
the normal form transform (under the assumption $\alpha\ne 1$, so we
have good nonlinear structures mainly due to the different
propagation speed.) To handle the quadratic interaction, we have to
assume radial symmetry so that we have wider class of Strichartz
estimates.

\section{Transform of equation}

This section is devoted to transform the equation by using the
normal form.  It is convenient first to change the system into first
order as usual. Let \EQ{ \cU:=u - i\LR D^{-1}\dot u,\quad  \cN:=n -
iD^{-1}\dot n/\al,} where $\LR x=(1+x^2)^{1/2}$, then $u=\re
\cU=(\cU+\bar \cU)/2$, $n=\re \cN=(\cN+\bar \cN)/2$ and the
equations for $(\cU,\cN)$ are \EQ{\label{eq:KGZ2}
 \CAS{ (i\p_t+\LR D)\cU =\LR D^{-1}(\cN\cU/4+\bar{\cN}\cU/4+
 \cN\bar{\cU}/4+\bar{\cN}\bar{\cU}/4),\\
   (i\p_t+\al D) \cN = \al D(\cU\bar{\cU}/4+\bar{\cU}\cU/4+\cU^2/4+\bar{\cU}^2/4).}}

Now we introduce some notations. We use $K(t),W_\alpha(t)$ to denote
the Klein-Gordon and the wave propagators:
\[K(t)\phi=\ft^{-1}e^{it\LR\xi}\hat{\phi},\quad W_\alpha(t)\phi=\ft^{-1}e^{i\alpha t|\xi|}\hat{\phi}, \pq \hat\phi=\F\phi.\]
Let $\eta_0: \R\rightarrow [0, 1]$ denote a radial smooth function
supported in $\{|\xi|\leq 2\}$ and equal to $1$ in $\{|\xi|\leq
1\}$. For $k\in \Z$ let
$\chi_k(\xi)=\eta_0(\xi/2^k)-\eta_0(\xi/2^{k-1})$ and $\chi_{\leq
k}(\xi)=\eta_0(\xi/2^k)$. For $k\in \Z$ let $P_k, P_{\leq k}$ denote
the operators on $L^2(\R^3)$ defined by
$\widehat{P_ku}(\xi)=\chi_k(|\xi|)\widehat{u}(\xi),\widehat{P_{\leq
k}u}(\xi)=\chi_{\leq k}(|\xi|)\widehat{u}(\xi)$.

For a quadratic term $uv$, we use $(uv)_{LH}$, $(uv)_{HH}$,
$(uv)_{HL}$ to denote the three different interactions
\[
 (uv)_{LH}=\sum_{k\in\Z}P_{\leq k-k_\alpha}uP_kv,\quad (uv)_{HL}=(vu)_{LH},\quad (uv)_{HH}=\sum_{\substack{|k_1-k_2|< k_\alpha \\ k_1,k_2\in\Z}}P_{k_1}uP_{k_2}v,\]
 where $k_\alpha$ is a large number which is determined later, depending on $\alpha$.
It is obvious that we have
\EQ{
 uv\pt=(uv)_{HH}+(uv)_{LH}+(uv)_{HL},}
and they are all radial if $u,v$ are both radial.
Moreover, for any such index $*=HH,HL,LH$, we denote the bilinear symbol (multiplier) by
\EQ{
 \F(uv)_* = \int \cP_*\hat u(\x-\y)\hat v(\y)d\y,}
and finite sum of those bilinear operators are denoted by the sum of indices:
\EQ{
 (uv)_{*_1+*_2+\cdots}=(uv)_{*_1}+(uv)_{*_2}+\cdots.}

From Duhamel's formula and taking a Fourier transform, we get that
the first equation of \eqref{eq:KGZ2} is equivalent to
\begin{align*}
\hat{\cU} &=e^{it\LR\xi}\hat{\cU_0}-i\LR\xi^{-1}\int_0^t
e^{i(t-s)\LR\xi}\ft(nu)_{HL} ds
 -i\LR\xi^{-1}\int_0^t e^{i(t-s)\LR\xi}\ft(nu)_{HH+LH}ds
\end{align*}
Especially, for the second term, we have
\begin{align*}
&-i\LR\xi^{-1}\int_0^t \int e^{i(t-s)\LR\xi}\cP_{HL} \hat{n}(s,\xi-\eta)\hat{u}(s,\eta)d\eta
ds
\\
=&-\frac{i}{4}\LR\xi^{-1}e^{it|\xi|^2}\int_0^t \int \cP_{HL} e^{is\om_1}[e^{-i\alpha s|\xi-\eta|}\hat{\cN}(s,\xi-\eta)][e^{-is\LR\eta}\hat{\cU}(s,\eta)]d\eta ds
\\
&-\frac{i}{4}\LR\xi^{-1}e^{it|\xi|^2}\int_0^t \int \cP_{HL} e^{is\om_2}[e^{i\alpha s|\xi-\eta|}\hat{\bar\cN}(s,\xi-\eta)][e^{-is\LR\eta}\hat{\cU}(s,\eta)]d\eta ds
\\
&-\frac{i}{4}\LR\xi^{-1}e^{it|\xi|^2}\int_0^t \int \cP_{HL} e^{is\om_3}[e^{-i\alpha s|\xi-\eta|}\hat{\cN}(s,\xi-\eta)][e^{is\LR\eta}\hat{\bar\cU}(s,\eta)]d\eta ds
\\
&-\frac{i}{4}\LR\xi^{-1}e^{it|\xi|^2}\int_0^t \int \cP_{HL} e^{is\om_4}[e^{i\alpha s|\xi-\eta|}\hat{\bar\cN}(s,\xi-\eta)][e^{is\LR\eta}\hat{\bar\cU}(s,\eta)]d\eta ds,
\end{align*}
where
\begin{align*}
& \om_1=-\LR\x+\al|\x-\y|+\LR\y,
& \om_2=-\LR\x-\al|\x-\y|+\LR\y,\\
& \om_3=-\LR\x+\al|\x-\y|-\LR\y,
& \om_4=-\LR\x-\al|\x-\y|-\LR\y.
\end{align*}

 It is obvious that
$\om_2$ and $\om_4$ will not vanish in the support of $\cP_{HL}$: $|\xi|\sim |\xi-\eta|\gg |\eta|$. For example, if we choose $k_\alpha \ge 5$, then
\begin{align*}
|\om_2|, |\om_4|\sim_\alpha \LR\xi.
\end{align*}
Therefore, there is no resonance in these cases.

In contrast, $\om_1$ and $\om_3$ have more trouble since they
vanish when $|\eta|=0$ and $|\x|=c_\alpha:=2\alpha/|\alpha^2-1|$ in
the support of $\cP_{HL}$. Therefore, we need further to
distinguish $(uv)_{HL} $ between resonant and non-resonant frequency
parts as follows \EQ{
 (uv)_{\al L}=\sum_{\substack{|2^k-c_\alpha|\le \delta_\alpha,\\ k\in\Z}}P_kuP_{\leq k-k_\alpha}v,\quad
 (uv)_{X L}=\sum_{\substack{|2^k-c_\alpha|>  \delta_\alpha,\\ k\in\Z}}P_kuP_{\leq k-k_\alpha}v,}
and similarly denote $(uv)_{L\al}$, $(uv)_{LX}$. Then we use normal
form only for non-resonant parts. We give the estimates of $\om_1$
and $\om_3$ precisely in the following lemma, similar to the
estimates in \cite{MN1}:
\begin{lem} Let $1\ne \alpha>0$, then there exist $c_\alpha$, $\delta_\alpha$ and $k_\alpha$   such that
in the support of $\cP_{XL}$,
\[|\om_1|\sim_\alpha |\xi|,\quad |\om_3|\sim_\alpha \LR{\xi}.\]
\end{lem}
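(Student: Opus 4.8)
The plan is to analyze the two phase functions $\om_1 = -\LR\x + \al|\x-\y| + \LR\y$ and $\om_3 = -\LR\x + \al|\x-\y| - \LR\y$ on the support of $\cP_{XL}$, where $|\x| \sim |\x-\y| \gg |\y|$ (so $|\y| \le 2^{-k_\alpha}|\x-\y|$ roughly) and additionally $\big||\x| - c_\alpha\big| > \delta_\alpha$ with $c_\alpha = 2\alpha/|\alpha^2-1|$. The guiding principle is that when $\y = 0$ one has $\om_1|_{\y=0} = \om_3|_{\y=0} = -\LR\x + \al|\x| =: g(|\x|)$, a radial function of $|\x|$ whose only zero on $(0,\infty)$ is at $|\x| = c_\alpha$ (solving $\al r = \sqrt{1+r^2}$ gives $r^2(\al^2-1) = 1$; note this requires $\al > 1$, while for $\al < 1$ the function $\al r - \LR r$ is strictly negative — I would treat both cases, observing that in the $\al<1$ case one simply has $|g(r)| \gtrsim_\al \LR r$ everywhere and the resonant set is empty, so $\delta_\alpha$ can be taken trivially). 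For $\al > 1$, away from $r = c_\alpha$ one has a quantitative lower bound $|g(r)| \gtrsim_\al \min(\LR r, \,|r - c_\alpha|)$, and one checks $|g'(r)| = |\al - r/\LR r|$ is bounded below near $c_\alpha$, giving $|g(r)| \gtrsim_\al \delta_\alpha$ on $|r - c_\alpha| > \delta_\alpha$ while $|g(r)| \sim_\al \LR r \sim_\al |\x|$ once $|r-c_\alpha|$ is large (say $r \ge 2c_\alpha$ or $r \le c_\alpha/2$).

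Next I would control the error from $\y \ne 0$. Write $\om_1 = g(|\x|) + [\al(|\x-\y| - |\x|) + (\LR\y - \LR 0)]$ and $\om_3 = g(|\x|) + [\al(|\x-\y| - |\x|) - (\LR\y - 1)]$. Since $\big||\x-\y| - |\x|\big| \le |\y|$ and $\big|\LR\y - 1\big| \le |\y|$, the bracketed terms are $O(|\y|) \le O(2^{-k_\alpha})|\x|$ on $\cP_{XL}$ (for $\om_3$, $\LR\y \pm 1 \le 2$, also $O(1)$, but the $|\y| \le 2^{-k_\alpha}|\x|$ bound is what matters for absorbing into $\LR\x$). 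Choosing $k_\alpha = k_\alpha(\al, \delta_\alpha)$ large enough that $C 2^{-k_\alpha} < \tfrac12$ times the relevant lower bound constants, the perturbation is dominated by half the main term $g(|\x|)$ on the region $|r - c_\alpha|$ small, and is dominated by a small fraction of $\LR\x$ on the region $|r-c_\alpha|$ large; in both regimes the triangle inequality yields $|\om_1| \gtrsim_\al |\x|$ on $\cP_{XL}$. Here one has to be slightly careful that near $r = c_\alpha$ the main term $g$ is only of size $\sim \delta_\alpha$, not $\sim |\x|$; but near $c_\alpha$ we also have $|\x| \sim c_\alpha \sim_\al 1$, so $|\om_1| \gtrsim_\al \delta_\alpha \sim_\al \delta_\alpha |\x| \gtrsim_\al |\x|$ with an $\al$-dependent constant, which is exactly what $\sim_\al$ permits. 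The upper bound $|\om_1| \lesssim_\al |\x|$ is immediate from $|{-\LR\x}| + \al|\x-\y| + \LR\y \lesssim_\al |\x|$ on the support.

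For $\om_3$ the situation is actually better: I would show $|\om_3| \sim_\al \LR\x$ outright. The point is that $\om_3 = -(\LR\x + \LR\y) + \al|\x-\y|$, and although $g(|\x|)$ can vanish at $c_\alpha$, the extra $-\LR\y$ term does not help it vanish — but more robustly, one should compare $\om_1$ and $\om_3$: $\om_3 = \om_1 - 2\LR\y$, and since $|\y|$ is small this does not immediately give nonvanishing. Instead the cleanest route is: on the region where $|r - c_\alpha| > \delta_\alpha$ is "large" ($g \sim \LR r$), both $\om_1$ and $\om_3$ are $\sim_\al \LR\x$ by the absorption argument above. On the region near $c_\alpha$ (where $|\x| \sim_\al 1$, so $|\x| \sim_\al \LR\x$), we already showed $|\om_1| \gtrsim_\al 1 \sim_\al \LR\x$, and the same computation gives $|\om_3| \gtrsim_\al 1 \sim_\al \LR\x$; combined with the trivial upper bound $|\om_3| \lesssim_\al \LR\x$ on the support, this gives $|\om_3| \sim_\al \LR\x$. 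So in fact $|\om_1| \sim_\al |\x|$ and $|\om_3| \sim_\al \LR\x$ follow by the same case analysis, with the stated stronger conclusion for $\om_3$ coming from the fact that its zero locus in the $\y = 0$ limit is contained in a bounded frequency range.

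The main obstacle, and where care is genuinely needed, is the order of choosing the constants: $c_\alpha$ is forced, then $\delta_\alpha$ must be chosen small relative to $\al$-dependent quantities (the lower bound for $|g'|$ near $c_\alpha$, and small enough that the "near $c_\alpha$" region stays inside a fixed compact set of frequencies), and only then is $k_\alpha$ chosen large depending on both $\al$ and $\delta_\alpha$ so that the $O(2^{-k_\alpha}|\x|)$ error is genuinely subordinate everywhere. One must also double-check consistency with the earlier requirement $k_\alpha \ge 5$ (used to kill $\om_2, \om_4$) — simply take $k_\alpha$ to be the maximum of $5$ and the value forced here. The estimates are otherwise elementary one-variable calculus on $g(r) = \al r - \sqrt{1+r^2}$ plus the triangle inequality, exactly in the spirit of the resonance analysis in \cite{MN1}.
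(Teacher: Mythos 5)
Your framework — identify the $\y=0$ resonance curve by one-variable calculus on a radial symbol, then absorb the $O(|\y|)$ perturbation by taking $k_\alpha$ large — is the right approach and matches the paper's strategy. But there is a decisive computational error at the very first step: $\LR{0}=1$, not $0$, so $\om_1|_{\y=0}=-\LR\x+\al|\x|+1=g(|\x|)+1$ and $\om_3|_{\y=0}=-\LR\x+\al|\x|-1=g(|\x|)-1$. These are \emph{not} equal, and neither equals your $g(|\x|)=-\LR\x+\al|\x|$. Consequently the resonance equation is not $\al r=\LR r$ (which would give $c_\alpha=1/\sqrt{\al^2-1}$) but rather $\al r=\LR r-1$ for $\om_1$ and $\al r=\LR r+1$ for $\om_3$, yielding the paper's $c_\alpha=2\al/|\al^2-1|$. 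More seriously, this flips the case analysis: $g+1$ vanishes on $(0,\I)$ precisely when $\al<1$, while $g-1$ vanishes precisely when $\al>1$. So $\om_1$ and $\om_3$ never share a resonance; for each $\al$ exactly one of them is resonant, and it is the \emph{other} one that is nonresonant on all of $\cP_{HL}$. Your conclusion that ``for $\al<1$ the resonant set is empty and $|g(r)|\gtrsim_\al\LR r$ everywhere'' is exactly backwards for $\om_1$: when $\al<1$ the function $g(r)+1=\al r-\LR r+1$ does vanish at $c_\alpha=2\al/(1-\al^2)$, and without the $XL$ restriction centred on that point the claimed bound $|\om_1|\gtrsim_\al|\x|$ is false. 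Your attempt would therefore ``prove'' the lemma with a wrong $c_\alpha$, and with $\cP_{XL}$ built from that wrong $c_\alpha$ the conclusion fails on the true resonant shell.

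A secondary consequence of the same error is that your argument cannot see why the lemma asserts the \emph{inhomogeneous} bound $|\om_3|\sim_\al\LR\x$ but only the homogeneous $|\om_1|\sim_\al|\x|$: the distinction comes from the behaviour at $\x\to 0$, where $g(0)+1=0$ (so $\om_1$ genuinely degenerates linearly in $|\x|$ near the origin) while $g(0)-1=-2$ (so $\om_3$ stays bounded away from zero). With both symbols replaced by the same $g$ this asymmetry disappears, and indeed your ``upper bound $|\om_1|\lesssim_\al|\x|$ is immediate'' claim is not even correct for small $|\x|$ unless you first cancel the $-\LR\x$ against the $+\LR\y$ (i.e.\ use $\LR\x-1\le|\x|$ and $\LR\y-1\le|\y|$), which again hinges on the $\pm 1$ you dropped. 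The fix is straightforward — carry the $\pm 1$ into the main term, split into $\al<1$ (resonance in $\om_1$, none in $\om_3$) and $\al>1$ (resonance in $\om_3$, none in $\om_1$), and redo the calculus for $f(r)=\al r-\LR r+1$ and $g(r)=\al r-\LR r-1$ separately as in the paper — but as written the proposal does not establish the lemma.
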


\begin{proof}
We will use the simple fact $\LR\eta-1=\frac{|\eta|^2}{\LR\eta+1}
\le |\eta|$.

(1) We consider the case $0<\alpha<1$.

  For $\om_1$, by solving
\[\LR\x-1=\al|\x|,\]
we can get the resonant frequency
\[c_\alpha=\frac{2\alpha}{1-\alpha^2}.\]
 Now we estimate the function $f(r):=\alpha r-\LR r+1$. Since $f'(r)=\alpha-r/\LR r$ and
$f''(r)=-1/\LR r^3$, $f$ is convex and has only maximum at
\[r_0=\frac{\alpha}{\sqrt{1-\alpha^2}}\in (0, c_\alpha).\]
There exists $\theta\in(0,1)$ such that $c_\alpha(1-\theta)\in (r_0,
c_\alpha)$. Let $\delta_\alpha=\theta c_\alpha$, then we can find a
number $\rho=\rho(\alpha,\delta_\alpha)$ such that
\[|f(r)|\geq \rho r \ \text { for } \ r\in [0, c_\alpha-\delta_\alpha)\cup(c_\alpha+\delta_\alpha,\I).\]
Choosing $k_{\alpha} \ge |\log_2\rho|+5$, we have
\[|\om_1|\sim_\alpha |\xi|\]
in the support of $\cP_{XL}$.

Now we consider $\om_3$. Choosing  $k_{\alpha} \ge |\log_2(1-\alpha)|+5$, we have 
$(1-\alpha)|\xi|\gg |\eta|$. Since
\[
|\om_3|
 \ge |\x|-\al|\x-\y|+1
 \ge (1-\al)|\x|-|\y|+1
 \ge c |\x|+1,\]
 we have
\[|\om_3|\sim_\alpha \LR\xi\]
in the support of $\cP_{HL}$.

(2) We consider the case $\alpha>1$.

For $\om_1$, by choosing $k_{\alpha}\ge|\log_2(\alpha-1)|+5$, we
have $|\xi|\gg |\eta|$ and $(\alpha-1)|\xi|\gg |\eta|$, and hence
\begin{align*}
|\om_1|=|(-\LR\x+1)+\al|\x-\y|+(\LR\y-1)|\sim_\alpha |\xi|
\end{align*}
 in the support of $\cP_{HL}$.

For $\om_3$, by solving
\[\LR\x+1=\al|\x|,\]
we can get the resonant frequency
\[c_\alpha=\frac{2\alpha}{\alpha^2-1}.\]
For the function $g(r):=\alpha r-\LR
r-1$,
since $g'(r)=\alpha-r/\LR r>0$, $g''(r)=-1/\LR r^3<0$ and the
asymptotic line is $y(r)=(\alpha-1)r-1$ when $r\rightarrow \I$,
$|g(r)|$ and  the line $h(r):=(\alpha-1)r/2$ have two crossing
points $r_{c1}$ and $r_{c2}$. Let $\delta_\alpha$ such that
\begin{align*}
\delta_\alpha=\max\{|c_\alpha-r_{c1}|,\ |c_\alpha- r_{c2}|\},
\end{align*}
we have
\[|g(r)|\geq \frac{\alpha-1}{2} r \ \text { for } \ r\in [0,c_\alpha-\delta_\alpha)\cup(c_\alpha+\delta_\alpha,\I).\]
Choosing $k_{\alpha} \ge |\log_2(\alpha-1)|+5$, 
 and noting that
$|\om_3|\sim 1$ for $1\gg|\x|\gg|\y|$,  we have
\[|\om_3|\sim_\alpha\LR\xi\]
in the support of $\cP_{XL}$.
\end{proof}

By the lemma above, we gain $|\x|^{-1}$ for high frequencies
($|\x|>1$) in all the cases, and lose $|\x|^{-1}$ for low
frequencies ($|\x|<1$) in the case  $\om_1$. In general, the lower
frequencies can be more problematic in the scattering problems, but
it will turn out that we can absorb $|\x|^{-1}$ by the Sobolev
embedding.

By similar analysis, corresponding to the four nonlinear terms of the second equation of \eqref{eq:KGZ2}, the resonance functions are
\begin{align*}
& \tilde{\om}_1=-\al|\x|+\LR{\x-\y}-\LR\y,
& \tilde{\om}_2=-\al|\x|-\LR{\x-\y}+\LR\y,\\
& \tilde{\om}_3=-\al|\x|+\LR{\x-\y}+\LR\y,
& \tilde{\om}_4=-\al|\x|-\LR{\x-\y}-\LR\y.
\end{align*}
It is easy to check that $|\tilde{\om}_j|$ behaves the same as $|\om_j|$ for $j=1,2,3,4$. Indeed, $\om_j$ and $\ti\om_j$ are in the dual relation with the correspondence $\x \mapsto \y-\x$.

In order to simplify the presentation, we assume that $\alpha<1$\footnote{This is the physical case in plasma} and the nonlinear terms in the first and second equation of \eqref{eq:KGZ2} are $\cN\cU$ and $\cU\bar\cU$ respectively. For other cases, the proof is almost the same.
Then we get that
the first equation of \eqref{eq:KGZ2} is equivalent to
\begin{align*}
\hat{\cU} &=e^{it\LR\xi}\hat{\cU_0}-i\LR\xi^{-1}\int_0^t
e^{i(t-s)\LR\xi}\ft(\cN\cU)_{XL} ds
 -i\LR\xi^{-1}\int_0^t e^{i(t-s)\LR\xi}\ft(\cN\cU)_{HH+LH+\alpha L}ds\\
&:=I+II+III.
\end{align*}
Using the equation
\eqref{eq:KGZ2} again, we get that
\begin{align}
\partial_t(e^{-it\LR\xi}\hat{\cU})=&-ie^{-it\LR\xi}\LR\xi^{-1}(\hat{\cN}*\hat{\cU})(\xi),\\
\partial_t(e^{-i\alpha t|\xi|}\hat{\cN})=&-ie^{-i\alpha t|\xi|}\alpha |\xi|(\hat{\cU}*\hat{\bar
\cU})(\xi).
\end{align}
Thus we have
\begin{align*}
II=&-i\LR\xi^{-1}\int_0^t \int e^{i(t-s)\LR\xi}\cP_{XL} \hat{\cN}(s,\xi-\eta)\hat{\cU}(s,\eta)d\eta
ds\\
=&-i\LR\xi^{-1}e^{it\LR\xi}\int_0^t \int \cP_{XL} e^{is\om}[e^{-i\alpha s|\xi-\eta|}\hat{\cN}(s,\xi-\eta)][e^{-is\LR\eta}\hat{\cU}(s,\eta)]d\eta ds,
\end{align*}
where the resonance function
 \[\om:=-\LR\x+\al|\x-\y|+\LR\y.\]
From integration by parts, we get
\begin{align*}
II=&-\LR\xi^{-1}e^{it\LR\xi}\int_0^t \int \cP_{XL}\om^{-1}\partial_s(e^{is\om })e^{-i\alpha
s|\xi-\eta|}\hat{\cN}(s,\xi-\eta)e^{-is\LR\eta}\hat{\cU}(s,\eta)d\eta ds\\
=&-\LR\xi^{-1}\int\cP_{XL}\om^{-1}[\hat{\cN}(t,\xi-\eta)\hat{\cU}(t,\eta)-e^{it\LR\xi}\hat{\cN}(0,\xi-\eta)\hat{\cU}(0,\eta)]d\eta\\
&-i\alpha\LR\xi^{-1}\int_0^t \int \cP_{XL}\om^{-1}e^{i(t-s)\LR\xi
}|\xi-\eta|\widehat{|\cU|^2}(s,\xi-\eta)\hat{\cU}(s,\eta)d\eta
ds\\
&-i\LR\xi^{-1}\int_0^t \int\cP_{XL}\om^{-1} e^{i(t-s)\LR\xi }\hat{\cN}(s,\xi-\eta)\LR\eta^{-1}(\hat{\cN}*\hat{\cU})(s,\eta)d\eta ds.
\end{align*}
We introduce a bilinear Fourier multiplier in the form
\EQ{
 \Om(f,g)=\F^{-1}\int\cP_{XL}\om^{-1}\hat f(\x-\y)\hat g(\y)d\y.}
Then we have
\begin{align*}
II=&-\LR\xi^{-1}\ft\Om(\cN,\cU)(t)+\LR\xi^{-1}e^{it\LR\xi}\ft\Om(\cN,\cU)(0)\\
&-i\alpha\LR\xi^{-1}\int_0^te^{i(t-s)\LR\xi}\ft
\Om(D|\cU|^2,\cU)(s)ds\\
&-i\LR\xi^{-1}\int_0^te^{i(t-s)\LR\xi}\ft \Om(\cN,\LR
D^{-1}(\cN\cU))(s)ds.
\end{align*}
Thus we obtain
\begin{equation}\label{int-U}
\begin{split}
\cU=&K(t)\cU_0+K(t)\LR D^{-1}\Om(\cN,\cU)(0)-\LR D^{-1}\Om(\cN,\cU)(t)\\
&-i\alpha\LR D^{-1}\int_0^tK(t-s)\Om(D|\cU|^2,\cU)(s)ds\\
&-i\LR D^{-1}\int_0^tK(t-s)\Om(\cN,\LR D^{-1}(\cN\cU))(s)ds\\
&-i\LR D^{-1}\int_0^tK(t-s)(\cN\cU)_{HH+LH+\alpha L}ds.
\end{split}
\end{equation}

For the second equation in \eqref{eq:KGZ2}, similarly, we can apply
the normal form reduction for the high-low and low-high interaction, and then get
that it is equivalent to
\begin{equation}\label{int-N}
\begin{split}
\cN=&W_\alpha(t)N_0+\alpha W_\alpha(t)D\tilde\Om(\cU,\cU)(0)-\alpha D\tilde\Om(\cU,\cU)(t)\\
&-i\alpha\int_0^tW_\alpha(t-s)D(\cU\bar
\cU)_{HH+\alpha L+L\alpha}ds\\
&-i\alpha\int_0^tW_\alpha(t-s)(D\tilde\Om(\LR D^{-1}(\cN\cU),\cU)+D\tilde\Om(\cU,\LR D^{-1}(\cN\cU)))(s)ds,
\end{split}
\end{equation}
where $\tilde \Om$ is a bilinear Fourier multiplier in the form \EQ{
 \tilde\Om(f,g)=\F^{-1}\int \cP_{XL+LX}\frac{\hat f(\x-\y)\hat{\bar g}(\y)}{\LR{\xi-\eta}-\LR\eta-\alpha|\xi|}d\y.}

\section{Strichartz estimates and nonlinear estimates}

In this section, we introduce the Strichartz norm we need. Because
of the quadratic term, our space relies heavily on the radial
symmetry.
For $\cU$ and $\cN$, we use the radial-improved Strichartz
norms \EQ{ \label{Strz norms}
 \cU \in X|Y,\quad
 \cN \in L^\I_tL^2_x \cap L^2_t\dot B^{-1/4-\e}_{q(-\e),2},}
for fixed $0<\e\ll 1$, where $\|\cU\|_{X|Y}:=\|P_{<0}\cU\|_{X}+\|P_{\geq0}\cU\|_{Y}$ and
\begin{align*}
X=L^\I_tL^2_x \cap L_t^2\dot B^{1/4+\e}_{q(\e),2}, \quad
Y=L^\I_tH^1_x \cap L_t^2 B^{2/3}_{q(\e),2}, \quad
\frac{1}{q(\e)}=\frac{1}{4}+\frac{\e}{3}.
\end{align*}
By the Sobolev embedding,
\begin{align*}
\dot H_x^1 = \dot B^1_{q(3/4),2} \subset \dot B^{1/4+\e}_{q(\e),2} \subset \dot B^{1/4-\e}_{q(-\e),2}\subset L_x^6,\\
H_x^{\frac{17}{12}-\e} \subset B^{\frac{2}{3}}_{q(\e),2} \subset  B^{\frac{2}{3}-2\e}_{q(-\e),2}\subset  B^{\frac{5}{12}-\e}_{6,2}\subset L_x^6.
\end{align*}
From now on, the third exponent of the Besov space will be fixed to
$2$ and so omitted. The condition $0<\e\ll 1$ ensures that \EQ{
 \frac{10}{3}<q(\e)<4<q(-\e)<\I,}
such that the norms in \eqref{Strz norms} are Strichartz-admissible
for radial solutions. The Strichartz estimates that we will use are
given in the following lemma.

\begin{lem}\label{lem:radstri} Assume that $\phi(x)$, $f(t,x)$ are spatially radially
symmetric in $\R^3$. Then

(a) Assume $(q,r),(\tilde q,\tilde r)\in [2,\infty]^2$ both satisfy
the Schr\"{o}dinger-admissible condition:
\[\frac{2}{q}+\frac{5}{r}<\frac{5}{2} \mbox{ or } (q,r)=(\infty,2)\]
and $\tilde q>2$. Let
\[\beta(q,r)=
\begin{cases}
\frac{3}{2}-\frac{3}{r}-\frac{1}{q},\quad  \frac{1}{q}+\frac{2}{r}<1
\mbox{ or } (q,r)=(\infty,2);\\
\frac{1}{r}+\frac{1}{q}-\frac{1}{2}, \quad \frac{1}{q}+\frac{2}{r}>1
\mbox{ and }
\frac{2}{q}+\frac{5}{r}<\frac{5}{2};\\
(\frac{1}{2}-\frac{1}{r})+, \quad \frac{1}{q}+\frac{2}{r}=1.
\end{cases}\]
where we used the notation $a+$ to denote $a+\e$ for arbitrary fixed
$\e>0$. Then
\begin{align}
\norm{K(t)P_{\geq0}\phi}_{L_t^q \dot{B}_{r,2}^{-\beta(q,r)}}\les&
\norm{\phi}_{L_x^2},\\
\normo{\int_0^tK(t-s)P_{\geq0}f(s)ds}_{L_t^q
\dot{B}_{r,2}^{-\beta(q,r)}}\les& \norm{P_{\geq0}f}_{L_t^{\tilde
q'}\dot{B}_{\tilde r',2}^{\beta(\tilde q,\tilde r)}},\\
\norm{K(t)P_{<0}\phi}_{L_t^q
\dot{B}_{r,2}^{\frac{2}{q}+\frac{3}{r}-\frac{3}{2}}}\les&
\norm{\phi}_{L_x^2},\\
\normo{\int_0^tK(t-s)P_{<0}f(s)ds}_{L_t^q
\dot{B}_{r,2}^{\frac{2}{q}+\frac{3}{r}-\frac{3}{2}}}\les&
\norm{P_{<0}f}_{L_t^{\tilde q'}\dot{B}_{\tilde
r',2}^{\frac{3}{2}-\frac{3}{\tilde r}-\frac{2}{\tilde q}}}.
\end{align}

(b) if $(q,r),(\tilde q,\tilde r)\in [2,\infty]^2$ both satisfy the
wave-admissible condition:
\[\frac{1}{q}+\frac{2}{r}<1 \mbox{ or } (q,r)=(\infty,2)\] and $\tilde q>2$, then
\begin{align}
\norm{W_\alpha(t)\phi}_{L_t^q\dot{B}_{r,2}^{\frac{1}{q}+\frac{3}{r}-\frac{3}{2}}}\les&
\norm{\phi}_{L_x^2},\\
\normo{\int_0^tW_\alpha(t-s)f(s)ds}_{L_t^q\dot{B}_{r,2}^{\frac{1}{q}+\frac{3}{r}-\frac{3}{2}}}\les&
\norm{f}_{L_t^{\tilde q'}\dot{B}_{\tilde
r',2}^{\frac{3}{2}-\frac{3}{\tilde r}-\frac{1}{\tilde q}}}.
\end{align}
\end{lem}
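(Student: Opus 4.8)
The plan is to reduce every estimate to \emph{dispersive} (pointwise decay) bounds for frequency-localized propagators, to pass from decay to Strichartz by the abstract $TT^*$ argument, to sum the dyadic pieces (which closes in $\ell^2$, matching the Besov summability index $2$), and to deduce the inhomogeneous (retarded) estimates from the homogeneous ones. First I would make the routine reductions. In (b) one has $W_\alpha(t)=W_1(\alpha t)$, so rescaling in $t$ reduces matters to $\alpha=1$ up to an $\alpha$-dependent constant, which is harmless here. Given the homogeneous estimates, composing with their $L^2$-duals (of the same form, since $W_1(t)^*=W_1(-t)$ and $K(t)^*=K(-t)$) yields the non-retarded inhomogeneous bounds, and the Christ--Kiselev lemma then upgrades these to the retarded ones; this is legitimate because the hypothesis $\tilde q>2$ forces $\tilde q'<2\le q$, i.e. $q>\tilde q'$, which is precisely the condition under which Christ--Kiselev applies (it excludes the double endpoint $q=\tilde q'$). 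So it remains to prove the three homogeneous estimates.

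By a Littlewood--Paley decomposition it suffices to bound a single dyadic piece. For the wave flow localized at $|\xi|\sim 2^k$, the input is the radial-improved pointwise decay of $e^{it|D|}P_k$ in $\R^3$: besides the classical $|t|^{-1}$ decay, radial symmetry produces an extra gain away from the light cone, coming from the asymptotics of the Bessel-type radial kernel near its turning point, and this is exactly what enlarges the admissible set from the classical $\frac1q+\frac1r\le\frac12$ to $\frac1q+\frac2r<1$, with the scaling-determined wave exponent $\beta=\frac32-\frac3r-\frac1q$; these are the radial-improved estimates used in \cite{GN} (see the references therein). Feeding this decay into the $TT^*$ scheme and summing the dyadic pieces gives the homogeneous estimates of (b). The same argument gives the first estimate of (a) in the range $\frac1q+\frac2r<1$, since for $k\ge 0$ the kernel of $e^{it\LR D}P_k$ decays at least as fast as that of $e^{it|D|}P_k$: the Hessian of $\LR\xi$ has full rank, so the additional radial curvature — although only $O(2^{-3k})$, hence weak at high frequency — can only improve the decay.

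For the complementary range $\frac2q+\frac5r<\frac52$ of (a), and for the low-frequency pieces $P_{<0}$, I would use the Schr\"odinger-like behavior of the phase: at high frequency $\nabla^2\LR\xi$ has full rank, so $e^{it\LR D}P_k$ gains an additional $|t|^{-1/2}$ for $|t|\gtrsim 2^k$; at low frequency $\LR\xi-1$ agrees with $\frac12|\xi|^2$ up to a non-degenerate lower-order correction, so after removing the modulation $e^{it}$ one is looking at a perturbation of the Schr\"odinger flow. In both cases I would invoke the radial-improved Schr\"odinger Strichartz estimates, whose admissible range is exactly $\frac2q+\frac5r<\frac52$ (versus the classical $\frac2q+\frac3r\le\frac32$), again reducing by scaling to a unit-frequency decay estimate and summing the pieces. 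Running $TT^*$ for $e^{it\LR D}P_k$ with its piecewise decay — wave-like for $|t|\lesssim 2^k$, improved for $|t|\gtrsim 2^k$ — and splitting the $|t-s|$ integral accordingly produces the two formulas for $\beta(q,r)$, with a logarithmic loss at the separating hyperplane $\frac1q+\frac2r=1$ absorbed by the $a+$ convention in the statement; the $P_{<0}$ estimates then carry the scaling exponent $\frac2q+\frac3r-\frac32$.

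The hard part is the radial-improved dispersive estimate itself — the sharp pointwise decay of the frequency-localized radial wave and Schr\"odinger kernels with the gain away from the cone/caustic — which needs a delicate stationary-phase (equivalently Bessel-asymptotic) analysis in the radial variable near the turning point, and is what makes the extended admissible ranges available in the first place. The remaining difficulties are bookkeeping: the sub-endpoints $\frac1q+\frac2r=1$ and $\frac2q+\frac5r=\frac52$ (handled by the $a+$ loss and by Bernstein's inequality, respectively), the special pairs such as $(\infty,2)$ (treated directly as energy identities), and, in (a), the matching of the wave-like and Schr\"odinger-like regimes of $\LR\xi$ across $|\xi|\sim 1$ when summing over all frequencies.
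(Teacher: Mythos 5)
Your high-level scheme — dyadic frequency decomposition, $TT^*$, interpolation, duality plus Christ--Kiselev (and the observation that $\tilde q>2$ gives $q>\tilde q'$) — matches what the paper does. Reducing $W_\alpha$ to $\alpha=1$ by time rescaling and reducing the interpolation to $(q,r)=(2,r)$ are also both used.

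The genuine gap is in what you call ``the hard part.'' You propose to take, as the primitive, a \emph{radial-improved pointwise dispersive estimate} for $e^{it|D|}P_k$ and $e^{it\LR D}P_k$ in $\R^3$, run the abstract $TT^*$ argument on it, and sum over $k$. But you never state such an estimate, and it is not a citable black box; moreover this is not how the extended range is obtained. The mechanism in the paper is different and one dimension down: using the radial Fourier--Bessel representation
\begin{equation*}
 u_k(t,x)=\frac{4\pi}{|x|}\int_{\rho\sim 1}e^{it\omega(2^k\rho)}\chi_0(\rho)2^{2k}\rho\,\hat\phi(2^k\rho)\sin(2^k|x|\rho)\,d\rho,
\end{equation*}
one pulls out the $|x|^{-1}$ factor, further localizes the spatial variable to dyadic annuli $|x|\sim 2^j$, and reduces to a $TT^*$ bound for a \emph{one-dimensional} oscillatory integral kernel $K(t,x)=(e^{it\omega(2^kD)}\F^{-1}\chi_0^2)(2^kx)$, measured in $L^1_t L^{r/2}_{|x|<2^j}$. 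The admissible ranges $\frac{2}{q}+\frac{5}{r}<\frac52$ (Klein--Gordon, Schr\"odinger-type) and $\frac1q+\frac2r<1$ (wave-type), and the exponents $\beta(q,r)$, come out of the \emph{double} dyadic summation in $(j,k)$ after interpolating the annulus-localized $(2,2)$, $(2,\infty)$ and classical $(2,r)$ bounds. You omit the spatial annulus decomposition entirely, and this is precisely where the radial gain lives (the $|x|^{-1}$ weight improves the $L^r_{|x|\sim 2^j}$ norm for large $j$); a single 3D pointwise decay estimate fed into one global $TT^*$ will not by itself produce these exponents, nor the logarithmic endpoint behavior at $\frac1q+\frac2r=1$ (which the paper gets as a $\log$ in $k$ from the $r=4$, $-k\le j\le k$ part of the sum).

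Two further soft spots. First, your claim that for $k\ge0$ the Klein--Gordon kernel ``decays at least as fast'' as the wave kernel because the extra Hessian ``can only improve the decay'' is a heuristic, not a reduction; the paper does the stationary-phase analysis for the phase $t\LR{2^k\rho}+2^kx\rho$ directly, obtaining a wave-like $|t|^{-2}$ bound (two integrations by parts, valid when $|t|2^{k\wedge0}\gg2^j$) \emph{and} a stationary-phase $|t|^{-1/2}2^{(k\vee0)/2}2^{-(k\wedge0)}$ bound, then chooses the better one by regime. Second, your treatment of $P_{<0}$ as ``a perturbation of the Schr\"odinger flow after removing $e^{it}$'' is a reasonable intuition but is not what happens: the paper runs the same 1D kernel analysis for all $k$, and the low-frequency exponent $\frac2q+\frac3r-\frac32$ comes from the $k<0$ branch of the $(j,k)$ sum (with the extra $2^{k(\frac12-\frac3r)}$ gain), not from a reduction to free Schr\"odinger.
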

\begin{proof}
The proof of (b) can be found in \cite{GuoWang}, and the previous references therein. Using their idea, we give a rough proof for both (a) and (b).
By Riesz-Thorin interpolation and the classical Strichartz estimates, it suffices to prove the lemma for $(q,r)=(2,r)$.
Consider a free solution on $\R^3$ with $|\x|\sim 2^k$ frequency in the form
\EQ{
 u_k(t,x)=e^{it\om(D)}P_k\phi(x),}
where $\phi\in L^2_x$ is radial, and $\om(|\xi|)$ is the dispersion
function. Computing it in polar coordinate, we have
\EQ{\label{eq:FB}
 u_k(t,x)=\frac{4\pi}{|x|}\int_{\ro\sim 1}e^{it\om(2^k\ro)}\chi_0(\ro)2^{2k}\ro\hat{\phi}(2^k\ro)\sin(2^k|x|\ro)d\ro.}
Hence if for some $j$, we have an estimate of the form \EQ{
\label{L2Linf bd}
 \|\chi_j(|x|)\int_\R e^{it\om(2^k\ro)+i2^kx\ro}\chi_0(\ro)f(\ro)d\ro\|_{L^2_tL^r_x(\R^2)} \lec 2^{\al j+\beta k}\|f\|_{L_x^2(\R)},}
with some  $\al,\beta$ and $r\ge 2$, then we get \EQ{
 \|\chi_j(|x|)u_k(t,x)\|_{L^2_t L^r_x(\R^{1+3})} \lec 2^{(\al-1+2/r) j+(1/2+\beta) k}\|\phi\|_{L^2_x(\R^3)}.}
Let $Tf$ be the inside of the norm on the left of \eqref{L2Linf bd}.
Then we have \EQ{
 \pt T^*F=\iint e^{-is\om(2^k\x)-i2^ky\x}\chi_0(\x)\chi_j(|y|)F(s,y)dyds,
 \pr TT^*F=\iint e^{i(t-s)\om(2^k\x)+i2^k(x-y)\x}\chi_0^2(\x)d\x \cdot\chi_j(|x|)\chi_j(|y|)F(s,y)dyds,}
and so \EQ{
 |TT^*F| \le |(e^{it\om(2^kD)}\F^{-1}\chi_0^2)(2^kx)\chi_{\leq j+1}(|x|)|*|F|.}
Let $K(t,x)=(e^{it\om(2^kD)}\F^{-1}\chi_0^2)(2^kx)$. Then
\eqref{L2Linf bd} will follow from \EQ{
 \|K(t,x)\|_{L^1_{t\in\R}L^{r/2}_{|x|<2^j}} \lec 2^{2\al j+2\beta k}.}

(a) In the Klein-Gordon case $\om(\ro)=\LR\ro$, \EQ{
 K(t,x)=\int e^{it\LR{2^k\ro}+i2^kx\ro}\chi_0^2(\ro)d\ro.}
Simple computation shows that $\om'(\rho)=\rho\LR{\rho}^{-1}$,
$\om''(\rho)=\LR{\rho}^{-3}$. For $r=2$, we use the local smoothing
estimates. Indeed, using the Plancherel's identity in $t$ and
Cauchy-Schwartz inequality in $x$, we get
\begin{align*}
\norm{Tf}_{L_t^2L_x^2}\les 2^{j/2}2^{-k/2}\LR{2^{k}}^{1/2}\norm{f}_2,
\end{align*}
and hence \EQ{\label{(2,2)}
 \|e^{it\LR{D}}P_k\phi\|_{L^2_t L^2_{|x|\sim 2^j}} \les 2^{j/2}2^{-(k\wedge
 0)/2}\norm{\phi}_{L_x^2}}
where we used the notation $a\vee b=\max(a,b),\ a\wedge
b=\min(a,b)$.

Let $\psi(\ro)=t\LR {2^k\ro}+2^k\ro x$ . Then
$\psi'(\rho)=t2^{2k}\rho\LR{2^k\rho}^{-1}+2^kx$. Thus if
$|t|2^{k\wedge 0}\gg 2^j$, then $|\psi'(\rho)|\ges
|t|2^{2k}\LR{2^k}^{-1}$, using integration by parts twice we get
\[|K(t,x)|\leq \int \big|\partial_\rho[\psi'(\rho)^{-1}\partial_\rho
(\chi_0^2(\rho)\psi'(\rho)^{-1})]\big| d\rho\les
|t|^{-2}2^{-4k}\LR{2^k}^2.\] Combining with the trivial bound
$|K|\les 1$, we get that for $k\geq0$ and $j\geq -k$,
 \EQ{
 \|K\|_{L^1_t L^\I_{|x|<2^{j}}} \lec \int_{|t|<2^{j+2}}dt+\int_{|t|>2^{j+2}}2^{-2k}t^{-2}dt \lesssim 2^j.}
Hence,  for $k\geq0$ and $j\geq -k$,
\begin{align}\label{(2,I)-W-high}
 \|e^{it\LR D}P_k\phi\|_{L^2_tL^\I_{|x|\sim2^j}} \lec
2^{(k-j)/2}\|\phi\|_{L^2_x}.
\end{align}
Interpolating \eqref{(2,2)}  with \eqref{(2,I)-W-high} and classical
Strichartz estimate
 \EQ{ \|e^{it\LR{D}}P_k\phi\|_{L^2_tL^\I_{x\in
\R^3}} \les 2^{(k\wedge 0)/2}2^{k\vee 0}\|\phi\|_{L^2_x},} we can
get the homogeneous estimates in part (a) for wave-admissible pairs.

Now we use the stationary phase method to get an improvement due to the
non vanishing second derivative. Indeed \EQ{ |\psi''(\ro)|=
|\frac{2^{2k}t}{\LR {2^k\ro}^{3}}|\gtrsim |t|2^{-(k\vee
0)}2^{2(k\wedge 0)}}in the support of $\chi_0$. Hence by the
stationary phase method\EQ{\label{K-high}
 |K(t,x)|\les |t|^{-1/2}2^{(k\vee 0)/2}2^{-(k\wedge 0)}.}
Thus eventually we have
\[|K(t,x)\chi_j(x)|\les  |t|^{-1/2}2^{(k\vee 0)/2}2^{-(k\wedge 0)}1_{\{|t|2^{k\wedge 0}\les 2^j\}}+|t|^{-2}2^{-4k}\LR{2^k}^21_{\{|t|2^{k\wedge 0}\gg 2^j\}}.\]
Therefore,
\[\norm{K}_{L_t^1L^\infty_{|x|\les 2^j}}\les
2^{j/2}2^{(k\vee 0)/2}2^{-3(k\wedge 0)/2}+2^{-j}2^{k\wedge
0}2^{-4k}\LR{2^{2k}},\] and then for $j\geq -\frac{5}{3}(k\vee
0)-(k\wedge 0)$ we have
\begin{align}\label{(2,I)-K}
 \|e^{it\LR D}P_k\phi\|_{L^2_tL^\I_{|x|\sim2^j}} \lec
2^{-3j/4+3(k\vee 0)/4-(k\wedge 0)/4}\|\phi\|_{L^2_x}.
\end{align}
In particular, by interpolation between \eqref{(2,I)-K} and
\eqref{(2,I)-W-high}, we get for $k\geq 0,j\geq -k$,
\begin{align}\label{(2,I)}
 \|e^{it\LR D}P_k\phi\|_{L^2_tL^\I_{|x|\sim2^j}} \lec
2^{\theta(k-j)}\|\phi\|_{L^2_x}, \, \frac{1}{2}\leq \theta \leq
\frac{3}{4}.
\end{align}

Interpolating \eqref{(2,2)} with \eqref{(2,I)} and classical
Strichartz estimates, we get that for $k<0$, if $r>10/3$, then
\begin{align*}
 \|e^{it\LR{D}}P_k\phi\|_{L^2_tL_x^r} &\lesssim
 (\sum_{j\leq -k}2^{\frac{j-k}{r}+\frac{k}{2}(1-\frac{2}{r})}
 +\sum_{j>-k}2^{\frac{j-k}{r}-\frac{3j+k}{4}(1-\frac{2}{r})})\|\phi\|_{L^2_x}\les 2^{k(\frac{1}{2}-\frac{3}{r})}\|\phi\|_{L^2_x};
\end{align*}
for $k\geq 0$, if $\frac{10}{3}<r<4$, then
\begin{align*}
 \|e^{it\LR{D}}P_k\phi\|_{L^2_tL_x^r} \les&
 (\sum_{j\leq -k}2^{\frac{j}{r}+k(1-\frac{2}{r})}+\sum_{j\leq k}2^{\frac{j}{r}+\frac{1}{2}(k-j)(1-\frac{2}{r})}+
 \sum_{j\geq
 k}2^{\frac{j}{r}+\frac{3}{4}(k-j)(1-\frac{2}{r})})\|\phi\|_{L^2_x}\\
 \les& 2^{\frac{k}{r}}\|\phi\|_{L^2_x};
\end{align*}
for $k\geq 0$, if $r=4$, then
\begin{align*}
 \|e^{it\LR{D}}P_k\phi\|_{L^2_tL_x^r} \les&
 (\sum_{j\leq -k}2^{\frac{j}{r}+k(1-\frac{2}{r})}+\sum_{-k\leq j\leq
 k}2^{k/4}+
 \sum_{j>k}2^{\frac{j}{r}+\frac{3}{4}(k-j)(1-\frac{2}{r})})\|\phi\|_{L^2_x}\\
 \les &\jb{k}2^{\frac{k}{4}}\|\phi\|_{L^2_x};
\end{align*}
for $k\geq 0$, if $r>4$, then
\begin{align*}
 \|e^{it\LR{D}}P_k\phi\|_{L^2_tL_x^r} \les&
 (\sum_{j\leq -k}2^{\frac{j}{r}+k(1-\frac{2}{r})}+
 \sum_{j>-k}2^{\frac{j}{r}+\frac{1}{2}(k-j)(1-\frac{2}{r})})\|\phi\|_{L^2_x}
 \les 2^{k(1-\frac{3}{r})}\|\phi\|_{L^2_x}.
\end{align*}
Therefore, by interpolation the homogeneous estimates for
Schr\"{o}dinger-admissible pairs in part (a) is proved.

The inhomogeneous linear estimates follow from the duality argument
and the Christ-Kiselev lemma, similar to \cite{GuoWang}.

(b)
In the wave case $\om(\ro)=|\ro|$, we have
\EQ{
 \|\F^{-1}(\chi_0^2)(x\pm t)\|_{L^1_t L^{r/2}_{|x|<2^j}} \sim 2^{j},}
hence $\al=1/2$ (independent of $r$). Thus we obtain
\EQ{
 \||x|^{1/2-2/r}e^{it|D|}\chi_0(D)\phi\|_{L^2_tL^r_{|x|\sim2^j}} \lec \|\phi\|_{L^2_x}.}
In particular, we have
\EQ{
 \|e^{it|D|}\chi_0(D)\phi\|_{L^2_tL^r_x} \lec \|\phi\|_{L^2_x}\ (\forall r>4).}
 By scaling,
\EQ{
 \|e^{it|D|}P_k\phi\|_{L^2_tL^r_x} \lec (2^k)^{\frac{3}{2}-\frac{1}{2}-\frac{3}{r}}\|\phi\|_{L^2_x}\ (\forall r>4).}
This yields the radial improvement of the wave Strichartz in 3D.
\end{proof}

\begin{rem}
The generalized Strichartz estimates for Klein-Gordon equation was
also studied by Cho-Lee \cite{CL} which also addresses the
non-radial versions. Our proof is different from theirs, and the
idea is from \cite{GuoWang}. Our results give better bound on the
regularity, but the range of $(q,r)$ is the same except some
endpoints. More precisely, they prove that the borderline case $2/q+5/r=5/2$ is also admissible except for the endpoint $(q,r)=(2,10/3)$.
The borderline case for the Schr\"{o}dinger equation was partially proved in \cite{GuoWang}, which was extended except for the endpoint by \cite{K}.
The borderline case for the wave equation is prohibited except for the trivial energy norm.

The regularity in our estimates is optimal for all $(q,r)$ in the admissible range. Indeed,  there exists radial $L^2$
function $\phi\ne 0$ such that
\begin{align}\label{eq:sharpKG}
\|e^{it\LR{D}}P_k\phi\|_{L^q_tL_x^r} \ges C(q,r,k) \|\phi\|_{L^2_x},
\end{align}
where $C(q,r,k)=\LR{k}^{1/q}2^{(1/2-1/r)k}$ for $(q,r)$ satisfying $1/q+2/r=1$, and $C(q,r,k)=2^{\beta(q,r)k}$ for all other $(q,r)$ in the admissible range. By \eqref{eq:FB}, \eqref{eq:sharpKG}
is equivalent to the existence of $f$ such that
\begin{align}\label{eq:sharpKG2}
\|s^{\frac{2}{r}-1}\int_{\rho\sim
1}e^{it\LR{2^k\rho}}\chi_0(\rho)f(\rho)\sin(2^k
s\rho)d\rho\|_{L^q_tL_{s>0}^r} \ges C(q,r,k) 2^{-k/2}\|f\|_{L^2}.
\end{align}
Take $f=1_{[0,10]}(\rho)$, then we have
\begin{align*}
I:=&\int_{\rho\sim 1}e^{it\LR{2^k\rho}}\chi_0(\rho)f(\rho)\sin(2^k
s\rho)d\rho\\
=&\frac{1}{2i}\int_{\rho\sim
1}e^{it\LR{2^k\rho}}\chi_0(\rho)(e^{i2^k
s\rho}-e^{-i2^ks\rho})d\rho:=I_1-I_2.
\end{align*}
In the region $E=\{2^{-k}\ll |t| \ll 2^k$, $|t-s|\ll 2^{-k}\}$,
using integration by parts we get $|I_1|\ll 1$; on the other hand
\[|I_2|\sim |\int_{\rho\sim 1}e^{\frac{it}{\LR{2^k\rho}+2^k\rho}}e^{i2^k(t-s)\rho}\chi_0(\rho)d\rho|\sim 1.\]
Thus $|I|\sim 1$ on $E$. Hence,
\[L.H.S. \mbox{ of } \eqref{eq:sharpKG2}\ges 2^{-k/r}(\int_{2^{-k}\ll |t| \ll 2^k} |t|^{\frac{2q}{r}-q}dt)^{1/q}
\ges C(q,r,k)2^{-k/2}\norm{f}_2,\] and \eqref{eq:sharpKG} is proved.
\end{rem}

We will apply this lemma to the integral equations. Then in
order to close the argument, we need to do some nonlinear estimates.
\subsection{Bilinear terms} The above Strichartz norms neatly fit in
the bilinear terms on the right, which are partially resonant.
Indeed we have
\begin{lem}\label{lem:bi1} (1) For any $\cN$ and $\cU$, the following estimates hold
\begin{align*}
 \|\LR D^{-1}(\cN\cU)_{LH}\|_{L^1_tH^1_x} \les& \|\cN\|_{L^2_t\dot B^{-1/4-\e}_{q(-\e)}}
 \|\cU\|_{L_t^2\dot B^{1/4+\e}_{q(\e)}},\\
 \|\LR D^{-1}(\cN\cU)_{HH}\|_{L^1_tH^1_x} \les& \|\cN\|_{L^2_t\dot B^{-1/4-\e}_{q(-\e)}}
  \|\cU\|_{L_t^2\dot B^{1/4+\e}_{q(\e)}}.
 \end{align*}
 If $0\leq \theta\leq 1$, $\frac{1}{\tilde
q}=\frac{1}{2}-\frac{\theta}{2}$, $\frac{1}{\tilde
r}=\frac{1}{4}+\frac{\theta}{3}+\frac{\e}{3}$, then
 \begin{align}
  \|\LR D^{-1}(\cN\cU)_{\alpha L}\|_{L^{\tilde q'}_tL_x^{\tilde r'}} \les& \|\cN\|_{L^2_t\dot B^{-1/4-\e}_{q(-\e)}}\|\cU\|_{X|Y}.
\end{align}
(2) For any $\cU$, the following estimate holds
\begin{align} \|D(\cU \bar{\cU})_{HH}\|_{L^1_tL^2_x} \lec& \|\cU\|^2_{L_t^2(\dot B^{1/4+\e}_{q(\e)}|
 B^{2/3}_{q(\e)})}.\end{align}
If $0\leq \theta\leq 1$, $\frac{1}{\tilde
q}=\frac{1}{2}-\frac{\theta}{2}$, $\frac{1}{\tilde
r}=\frac{1}{4}+\frac{\theta}{3}-\frac{\e}{3}$, then
\begin{align}
 \|D(\cU \bar{\cU})_{\alpha L+L\alpha }\|_{L^{\tilde q'}_tL_x^{\tilde r'}}\lec& \|\cU\|^2_{X|Y}.
\end{align}
\end{lem}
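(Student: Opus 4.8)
The plan is to treat each of the six estimates by dyadic decomposition and a case-by-case application of Hölder in space together with Bernstein's inequality, exploiting the frequency restrictions encoded in the subscripts $LH,HH,\alpha L,L\alpha$. Throughout, the governing principle is: in an $LH$ or $HH$ interaction the $\LR D^{-1}$ (resp.\ $D$) factor is harmless because the output frequency is comparable to the larger input frequency, so we only need to balance two dyadic pieces in $L^2_t$ by Hölder in $t$ and then in $x$; in an $\alpha L$ or $L\alpha$ interaction the output frequency is pinned near $c_\alpha\sim 1$, so derivative factors are $O_\alpha(1)$ and the real issue is that one input ($\cU$ or $\cN$) is only measured in $L^\I_t$, forcing us to use the full range of radial Strichartz exponents with a free parameter $\theta$.

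First I would do part~(1). For $(\cN\cU)_{LH}=\sum_k P_{\le k-k_\alpha}\cN\,P_k\cU$: the output has frequency $\sim 2^k$, so $\|\LR D^{-1}P_k(\cdots)\|_{H^1_x}\sim 2^k\|P_k(\cdots)\|_{L^2_x}\cdot\LR{2^k}^{-1}2^{k}$-type bookkeeping collapses to $\lesssim \|P_{\le k-k_\alpha}\cN\|_{L^{r_1}_x}\|P_k\cU\|_{L^{r_2}_x}$ with $\tfrac1{r_1}+\tfrac1{r_2}=\tfrac12$; choose $r_1=r_2=q(-\e)$ on $\cN$'s side after a Bernstein gain $2^{-(k-k_\alpha)\cdot(\ldots)}$ on the low factor which, combined with the $\dot B^{-1/4-\e}$ regularity of $\cN$ and $\dot B^{1/4+\e}$ of $\cU$, sums in $k$ geometrically; then Hölder in $t$ ($L^2_tL^2_t\to L^1_t$) finishes it. The $HH$ case is identical but one sums over the pair $|k_1-k_2|<k_\alpha$ by Schur, using that the output frequency is $\lesssim 2^{k_1}$ so the $\LR D^{-1}$ again only helps. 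For the $\alpha L$ estimate, since the output frequency is localized to $2^k\sim c_\alpha$ the $\LR D^{-1}$ is a bounded operator, so it reduces to $\|P_{\sim c_\alpha}\cN\cdot P_{\le k-k_\alpha}\cU\|_{L^{\tilde q'}_tL^{\tilde r'}_x}$; put $\cN$ in $L^2_t$ with its Besov norm (at frequency $\sim1$ all Besov norms are comparable) and $\cU$ in the dual slot, and the stated relations $\tfrac1{\tilde q}=\tfrac12-\tfrac\theta2$, $\tfrac1{\tilde r}=\tfrac14+\tfrac\theta3+\tfrac\e3$ are exactly what makes $L^2_t L^{q(-\e)}_x\times L^{\tilde q''}_t L^{\tilde r''}_x\hookrightarrow L^{\tilde q'}_t L^{\tilde r'}_x$ hold while the remaining slot for $\cU$ is controlled by $\|\cU\|_{X|Y}$ via Sobolev embedding (here one uses $P_{<0}\cU\in X$ for the low output and $P_{\ge0}$ pieces through the $Y$ norm); the low-frequency $|\x|^{-1}$ loss flagged after the resonance lemma is absorbed, as promised there, by Sobolev embedding into $L^6_x$.

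Part~(2) runs along the same lines for $\cN=\cU\bar\cU$ in place of $\cN\cU$. For $(\cU\bar\cU)_{HH}$ the output frequency is $\lesssim$ the common input frequency, so the $D$ is absorbed and $\|D P_{k}(\cU\bar\cU)_{HH}\|_{L^2_x}\lesssim 2^{k}\|P_{k_1}\cU\|_{L^{q(\e)}_x}\|P_{k_2}\cU\|_{L^{q(\e)}_x}$ summed over $|k_1-k_2|<k_\alpha$, $k\le k_1+O(1)$, by Schur; the factor $2^{k}$ is matched by the $\dot B^{1/4+\e}$ regularities ($\tfrac14+\e$ twice exceeds $1$ after the Bernstein $L^{q(\e)}\cdot L^{q(\e)}\to L^2$ step) — here one also needs the $B^{2/3}_{q(\e)}$ part of the $\cU$-norm for the high output frequencies, which is why the right side is written with the $X|Y$-type norm $\|\cU\|_{L^2_t(\dot B^{1/4+\e}_{q(\e)}| B^{2/3}_{q(\e)})}$. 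For $(\cU\bar\cU)_{\alpha L+L\alpha}$ the output frequency is again $\sim c_\alpha\sim1$, so $D$ is bounded; one input is low frequency (put it in the $L^\I_t$-based part of $X|Y$ using a Bernstein $L^\I_x$ bound with a summable gain) and the other is placed in $L^2_t L^{\tilde r''}_x$, with $\theta$, $\tilde q$, $\tilde r$ as stated making the Hölder product land in $L^{\tilde q'}_tL^{\tilde r'}_x$.

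The main obstacle I anticipate is not any single estimate but the careful matching in the two $\alpha L$/$L\alpha$ estimates: one must verify that the one-parameter family of exponents $(\tilde q,\tilde r)$ is simultaneously (i) dual-admissible for the Strichartz input $\int_0^t K(t-s)(\cdots)$ or $\int_0^t W_\alpha(t-s)(\cdots)$ that will consume the left-hand side back in the main iteration, via Lemma~\ref{lem:radstri}, and (ii) reachable by Hölder from the product of the two factor-norms, where the low factor lives only in $L^\I_t$ so \emph{all} the time integrability must come from the single $L^2_t$ factor, leaving $\tilde q'$ pinned — and then checking that the spatial exponent $\tilde r'$, after the $\pm\e/3$ shift distinguishing $\cN\cU$ from $\cU\bar\cU$, still lies in the open radial-admissible window $q(-\e)<\tilde r''<\I$ (equivalently $\tilde r'$ in the dual window) for every $\theta\in[0,1]$. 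The endpoints $\theta=0$ and $\theta=1$ are the delicate ones and should be checked by hand; the interior follows by interpolation.
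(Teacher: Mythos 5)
Your plan coincides with the paper's proof in all essentials: $\LR{D}^{-1}\colon L^2\to H^1$ (and, for $\alpha L,L\alpha$, the output frequency pinned near $c_\alpha$) make the derivative factors harmless, and everything else is dyadic decomposition, spatial H\"older with Bernstein/Sobolev, temporal H\"older, and for the resonant terms the one-parameter interpolated norm $L^{2/\theta}_t L^{6/(3-2\theta)}_x$ for the low factor, controlled by $X$ by interpolating $L^\I_tL^2_x$ and $L^2_t\dot B^{1/4+\e}_{q(\e)}\hookrightarrow L^2_tL^6_x$. Two small slips worth noting: in part (2) you place the low factor in $L^\I_t$, which only yields $\theta=0$, i.e.\ $\tilde q'=2$ --- for general $\theta\in[0,1]$ you need the same $\theta$-interpolation you correctly describe for part (1); and the parenthetical ``$\tfrac14+\e$ twice exceeds $1$'' is false ($\tfrac12+2\e<1$), though harmless, since for low output the required exponent is $\le\tfrac14+\e$ and for high output you already invoke $B^{2/3}_{q(\e)}$ --- the paper avoids the bookkeeping by pairing $L^{q(-\e)}_x\times L^{q(\e)}_x$, which is exactly $L^2$-dual, and landing in $\dot B^{1/2-\e}_{q(-\e)}\times\dot B^{1/2+\e}_{q(\e)}$. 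Finally, your concern about radial admissibility of $(\tilde q,\tilde r)$ for $\theta\in\{0,1\}$ pertains to applying Lemma~\ref{lem:radstri} in the fixed-point iteration, not to the proof of this lemma, which requires only the H\"older exponent identities.
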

\begin{proof}
 (1) For the first inequality, it suffices to prove
\[ \|(\cN\cU)_{LH}\|_{L^2_x} \lec \|\cN\|_{\dot B^{-1/4-\e}_{q(-\e)}}
\|\cU\|_{L_t^2\dot B^{1/4+\e}_{q(\e)}}.\] By dyadic decomposition,
we have $(\cN\cU)_{LH}=\sum_{k_1\leq k_2-k_\alpha}P_{k_1}\cN
P_{k_2}\cU$. Then by H\"older inequality, we get
\begin{align*}
\norm{(\cN\cU)_{LH}}_{L_x^2}
\leq& (\sum_{k_2}|\sum_{k_1\leq k_2-k_\alpha}\norm{P_{k_1}\cN}_{L_x^{q(-\e)}}\norm{P_{k_2}\cU}_{L_x^{q(\e)}}|^2)^{1/2}\\
\les& (\sum_{k_2}|\sum_{k_1\leq k_2-k_\alpha}2^{k_1(\frac{1}{4}+\e)}2^{k_1(-\frac{1}{4}-\e)}\norm{P_{k_1}\cN}_{L_x^{q(-\e)}}\norm{P_{k_2}\cU}_{L_x^{q(\e)}}|^2)^{1/2}\\
\les&\|N\|_{\dot B^{-1/4-\e}_{q(-\e)}}\|u\|_{\dot
B^{1/4+\e}_{q(\e)}}.
\end{align*}
Similarly,  we can get the second one. For the third inequality, by
H\"{o}lder inequality and Sobolev embedding,
\begin{align*}
&\|(\cN \cU)_{\alpha L}\|_{L^{\tilde q'}_t L^{\tilde r'}_x}
 \lec \|\cN_\alpha\|_{L^2_t L^{q(-\e)}_x}\|\cU\|_{L^{\frac{2}{\theta}}_t L^{\frac{6}{3-2\theta}}_x}
  \lec \|\cN_\alpha\|_{L^2_t L^{q(-\e)}_x}\|\cU\|_{X}.
\end{align*}

(2) For the first inequality,
we have
\begin{align*}
 \|D(\cU \bar{\cU})_{HH}\|_{L^2_x} \leq& \sum_{|k_1-k_2|<k_\alpha}2^{k_2}\norm{P_{k_1}\cU}_{L_x^{q(-\e)}}\norm{P_{k_2}\bar{\cU}}_{L_x^{q(\e)}}\\
\les&\|\cU\|_{\dot B^{1/2-\e}_{q(-\e)}}\|\cU\|_{\dot
B^{1/2+\e}_{q(\e)}}\lec \|\cU\|^2_{\dot B^{1/4+\e}_{q(\e)}|
 B^{2/3}_{q(\e)}}.
\end{align*}
The proof of the second one is similar with the third one in (1).
\end{proof}

\subsection{Boundary terms}
Next, we estimate the boundary terms.
\begin{lem}
For any $\cN_0$ and $\cU_0$, we have
\begin{align*}
\|\LR D^{-1}\Om(\cN_0,\cU_0)\|_{H^{1}_x} \les \|\cN_0\|_{L^2_x}\|\cU_0\|_{H^1_x},\,
\|D\tilde\Om(\cU_0,\cU_0)\|_{L^2_x}\les \|\cU_0\|^2_{H^1_x}.
\end{align*}
As a consequence, for any $\cN$ and $\cU$
\begin{align*}
\|\LR D^{-1}\Om(\cN,\cU)\|_{L_t^\I H^{1}_x} \les \|\cN\|_{L_t^\I L^2_x}\|\cU\|_{L_t^\I
H^1_x},\,
\|D\tilde\Om(\cU,\cU)\|_{L_t^\I L^2_x}\les \|\cU\|^2_{L_t^\I
H^1_x}.
\end{align*}
\end{lem}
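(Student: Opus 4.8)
The plan is to reduce the time-dependent bounds to the stated fixed-time bilinear estimates, so the real content is the two inequalities
\[
\|\LR D^{-1}\Om(\cN_0,\cU_0)\|_{H^1_x}\les \|\cN_0\|_{L^2_x}\|\cU_0\|_{H^1_x},\qquad
\|D\tilde\Om(\cU_0,\cU_0)\|_{L^2_x}\les\|\cU_0\|_{H^1_x}^2.
\]
For the first one, recall that $\Om(f,g)=\F^{-1}\int\cP_{XL}\om^{-1}\hat f(\x-\y)\hat g(\y)\,d\y$, which is supported in the region $|\x|\sim|\x-\y|\gg|\y|$ with $|\x-\y|$ bounded away from $c_\alpha$. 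By the Lemma of Section 2, on this support we have $|\om|\sgec |\x|$ when $|\x|>1$ and $|\om|\sgec 1$ when $|\x|<1$; in all cases $\om^{-1}$ is bounded by $\LR\x^{-1}$ times a harmless constant on the high-frequency piece, while on the low-frequency piece (which forces $|\x|<1$, so $\LR\x\sim1$ there anyway) $\om^{-1}$ is merely $O(1)$. Hence $\LR D^{-1}\Om$ is, frequency-localized dyadically, an operator whose symbol is $O(\LR\x^{-2})$ on high frequencies and $O(1)$ on low frequencies, acting on the product $(\cN_0\cU_0)$ restricted to the $HL$-type frequency configuration.

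So, concretely, I would dyadically decompose: write $\LR D^{-1}\Om(\cN_0,\cU_0)$ as $\sum_k P_k\LR D^{-1}\Om(\cN_0, P_{\le k-k_\alpha}\cU_0 + \dots)$ — more precisely the sum over $k_1$ (the high frequency, $\sim|\x|\sim|\x-\y|$) of the piece with $\cN_0$ at frequency $\sim 2^{k_1}$ and $\cU_0$ at frequency $\le 2^{k_1-k_\alpha}$. Put the high-frequency factor $\cN_0$ in $L^2_x$ and the low-frequency factor $\cU_0$ in $L^\infty_x$ via Bernstein, $\|P_{\le k_1-k_\alpha}\cU_0\|_{L^\infty_x}\les 2^{(k_1-k_\alpha)\cdot 3/2}\|\cU_0\|_{L^2_x}$ or, better, sum the $L^\infty$ norms of the dyadic pieces $\|P_{k_2}\cU_0\|_{L^\infty_x}\les 2^{3k_2/2}\|P_{k_2}\cU_0\|_{L^2_x}\les 2^{k_2/2}\|P_{k_2}\cU_0\|_{H^1_x}$, which is summable in $k_2$ (the geometric decay in $k_2$ is what makes the $LH$/$HL$ structure work). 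Then the $H^1_x$ norm on the output picks up $2^{k_1}$ on high frequencies, but this is exactly cancelled by the two powers of $2^{-k_1}$ coming from $\LR D^{-1}\cdot\om^{-1}$, leaving a convergent sum $\sum_{k_1}2^{-k_1}\|P_{k_1}\cN_0\|_{L^2_x}\sum_{k_2\le k_1-k_\alpha}2^{k_2/2}\|P_{k_2}\cU_0\|_{H^1_x}$, which by Cauchy–Schwarz is $\les\|\cN_0\|_{L^2_x}\|\cU_0\|_{H^1_x}$. On the low-frequency output piece ($|\x|<1$, hence $|\x|\sim|\x-\y|<1$ too) there is no derivative gain needed since $\LR D^{-1}\sim1$ and $\om^{-1}=O(1)$, and one closes by the same Bernstein-and-sum argument with room to spare. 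The estimate for $D\tilde\Om(\cU_0,\cU_0)$ is entirely parallel: $\tilde\Om$ has symbol $(\LR{\x-\y}-\LR\y-\alpha|\x|)^{-1}$ on the support $\cP_{XL+LX}$, which by the dual version of the Lemma (the remark that $\tilde\om_j$ behaves like $\om_j$, with the $\x\mapsto\y-\x$ correspondence) is $O(\LR{\x-\y}^{-1})=O(\LR{\text{high freq}}^{-1})$ on the relevant region; the outer $D$ contributes $2^{k_1}$ which is only partially — one power — cancelled, but the leftover $2^{k_1}$ is absorbed by putting both factors in $H^1_x$... no: put the high factor $\cU_0$ in $H^1$ giving $2^{-k_1}\cdot2^{k_1}=1$, and the low factor in $L^\infty$ via Bernstein giving geometric decay, and sum.

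The consequence for time-dependent norms is immediate and I would state it in one line: since the bilinear operators $\LR D^{-1}\Om$ and $D\tilde\Om$ are fixed-time operations, applying the fixed-time estimate pointwise in $t$ and then taking $L^\infty_t$ (equivalently, $\sup_t$ commutes with the fixed-time bilinear bound) gives
\[
\|\LR D^{-1}\Om(\cN,\cU)\|_{L^\infty_t H^1_x}\les\|\cN(t)\|_{L^2_x}\,\|\cU(t)\|_{H^1_x}\Big|_{L^\infty_t}\les\|\cN\|_{L^\infty_t L^2_x}\|\cU\|_{L^\infty_t H^1_x},
\]
and likewise for $\tilde\Om$. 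I expect the main obstacle to be the low-frequency analysis of $\om^{-1}$: unlike the high-frequency case where the Lemma hands us $|\om|\sgec|\x|$, on $|\x|<1$ we only know $|\om|\sgec1$ (indeed $\om\to0$ only at the resonant sphere $|\x|=c_\alpha$, which has been excised), so one must be slightly careful that $\cP_{XL}$'s low piece genuinely has $|\x|$ bounded away from $c_\alpha$ — but since $c_\alpha>0$ is a fixed positive number and the low piece has $|\x|\lesssim2^{-k_\alpha}c_\alpha$, this is automatic once $k_\alpha$ is large, and then $\om^{-1}=O(1)$ with no loss. A secondary point is making sure the $H^1_x$ (inhomogeneous) norm, not $\dot H^1_x$, is the natural target — here $\LR D^{-1}$ rather than $D^{-1}$ in the definition of $\Om$ is what makes the low frequencies harmless, so the inhomogeneous Sobolev norm is exactly right and no low-frequency divergence occurs.
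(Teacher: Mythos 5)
Your dyadic decomposition plus Bernstein argument is a genuine alternative to the paper's proof, which is considerably more compact: the paper simply passes to Fourier side via Plancherel, bounds the symbol by $|\om|^{-1}\lec|\y|^{-1}$ on $\supp\cP_{XL}$ (valid since $|\om|\sim_\al|\x|$ and $|\x|\gg|\y|$ there), and then observes that the resulting convolution corresponds under Plancherel to a product estimated by $\|\cN_0\|_{L^2}\,\|\F^{-1}[|\y|^{-1}|\hat\cU_0|]\|_{L^\infty}$, with the $L^\infty$ factor controlled by the Sobolev-type embedding $\|\F^{-1}[|\x|^{-1}|\hat u|]\|_{L^\infty}\lec\|u\|_{H^1(\R^3)}$ (Cauchy--Schwarz and $\int|\x|^{-2}\LR{\x}^{-2}d\x<\I$). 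Your frequency-localized version is the same idea unpacked dyadically, and it does close, but it is longer and one has to track the Bernstein exponent $3k_2/2$ versus $k_2/2$ separately for $k_2<0$ and $k_2>0$.

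However, one assertion you make is wrong and you should not gloss over it. You claim that on the low-frequency output piece ($|\x|<1$) the weight $\om^{-1}$ is $O(1)$, arguing that the resonance has been excised near the sphere $|\x|=c_\al$. But the resonance function $\om=-\LR\x+\al|\x-\y|+\LR\y$ vanishes not only near $|\x|=c_\al$ but also as $\x,\y\to0$ (since $\LR\x-1\to0$ and $\al|\x-\y|\to0$), and the Lemma gives exactly $|\om|\sim_\al|\x|$, so $\om^{-1}\sim|\x|^{-1}$ genuinely blows up as $|\x|\to0$. The paper itself flags this (``lose $|\x|^{-1}$ for low frequencies''). Your argument nevertheless survives this loss: with the correct bound $\om^{-1}\lec2^{-k_1}$ on the $k_1<0$ block, and $\LR D^{-1}$ together with the output $H^1$ norm contributing only $O(1)$ there, the Bernstein gain on the $\cU_0$ factor is the full $2^{3k_2/2}$ (since $k_2<k_1<0$), and $\sum_{k_2\le k_1-k_\al}2^{3k_2/2}\|P_{k_2}\cU_0\|_{H^1}\lec2^{3k_1/2}\|\cU_0\|_{H^1}$, which when multiplied by $2^{-k_1}$ gives $2^{k_1/2}\|P_{k_1}\cN_0\|_{L^2}\|\cU_0\|_{H^1}$, summable over $k_1\le0$. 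So the proof can be repaired without changing its structure, but you must correct the claim about $\om^{-1}$ and use the $2^{3k_2/2}$ Bernstein factor on the low block rather than the weaker $2^{k_2/2}$ you quote; as written, the weaker $2^{k_2/2}$ combined with the $2^{-k_1}$ loss would not sum over $k_1<0$.

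A smaller point: your discussion of $D\tilde\Om$ is internally confused (``the leftover $2^{k_1}$ is absorbed\dots no:''), and it would be cleaner to state directly that the symbol of $D\tilde\Om$ is $O(1)$ on $\supp\cP_{XL+LX}$ (since $|\x|\sim|\x-\y|\sim|\tilde\om|$ there), so the high factor in $L^2$ costs $\LR{2^{k_1}}^{-1}\|P_{k_1}\cU_0\|_{H^1}$ and the low factor in $L^\infty$ via Bernstein gives the summable geometric factor. The deduction of the $L^\infty_t$ estimates from the fixed-time ones is indeed immediate and correctly handled.
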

\begin{proof}
We only prove $\|\Om(\cN_0,\cU_0)\|_{L^{2}_x} \lec
\|\cN_0\|_{L^2_x}\|\cU_0\|_{H^1_x}$, since the others are similar.
From the Plancherel equality we have
\begin{align*}
\|\Om(\cN_0,\cU_0)\|_{L^{2}_x}&\les
\normo{\int_{|\xi-\eta|\gg|\eta|}|\eta|^{-1}
|\hat{N_0}(\xi-\eta)|\cdot|\hat{\cU_0}(\eta)|d\eta}_{L^2}\les
\|\cN_0\|_{L^2_x}\|\cU_0\|_{H^1_x}.
\end{align*}
where we used the Sobolev embedding
$\norm{\ft^{-1}|\x|^{-1}|\hat{u_0}(\xi)|}_{L^\I}\les
\norm{u_0}_{H^1}$.
\end{proof}

To handle the other component, we will need a Coifman-Meyer type
bilinear multiplier estimates (see  Lemma 3.5 in \cite{GN}).

\begin{lem} For any $\cN$ and $\cU$ we have
\begin{align*}
\|\LR{D}^{-1}\Om(\cN,\cU)\|_{L_t^2(\dot
B^{1/4+\e}_{q(\e)}|B^{2/3}_{q(\e)})}
  &\lec \|\cN\|_{L^\I_t L^2_x}\|\cU\|_{L_t^2L_x^6},\\
\|D\tilde\Om(\cU,\cU)\|_{L^2_t \dot B^{-1/4-\e}_{q(-\e)}}&\lec \|\cU\|_{L^\I_tH^1_x}\|\cU\|_{L_t^2L_x^6}.
 \end{align*}
\end{lem}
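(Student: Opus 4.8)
The plan is to prove both estimates along the same lines, in three steps; I will indicate the one point where the regularity budget is saturated.

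\emph{Step 1 (reduction to Coifman--Meyer).} I would decompose the output frequency dyadically, $\Om(\cN,\cU)=\sum_kP_k\Om(\cN,\cU)$ and likewise for $D\tilde\Om(\cU,\cU)$. On $\supp\cP_{XL}$ one has $|\x|\sim|\x-\y|\sim2^k\gg|\y|$, so $P_k\Om(\cN,\cU)$ acts on $(P_{\sim k}\cN,\,P_{\le k-k_\alpha}\cU)$; by the resonance lemma $|\om|\sim_\alpha2^k$ there (and $|\tilde\om_1|\sim_\alpha2^k$ on $\supp\cP_{XL+LX}$, since $|\tilde\om_1|$ behaves like $|\om_1|$), while $\om,\tilde\om$ are smooth of size $O(2^k)$ on that set. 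Hence the truncated symbols $\om^{-1}\cP_{XL}\chi_k$ and $\tilde\om^{-1}\cP_{XL+LX}\chi_k$ are of the type handled by Lemma 3.5 of \cite{GN}, with symbol bound $O(2^{-k})$. Combining this with $\LR\x^{-1}\sim2^{-(k\vee0)}$ and $D\sim2^k$ on the shell gives, for any $1/r=1/r_1+1/r_2$,
\[
\|P_k\LR D^{-1}\Om(\cN,\cU)\|_{L^r_x}\lec2^{-k-(k\vee0)}\|P_{\sim k}\cN\|_{L^{r_1}_x}\|P_{\le k-k_\alpha}\cU\|_{L^{r_2}_x},\qquad \|P_kD\tilde\Om(\cU,\cU)\|_{L^r_x}\lec\|P_{\sim k}\cU\|_{L^{r_1}_x}\|P_{\le k-k_\alpha}\cU\|_{L^{r_2}_x},
\]
the $D$ in the second bound exactly absorbing the $2^{-k}$ of $\tilde\om^{-1}$.

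\emph{Step 2 (exponent bookkeeping).} In each term I would move the $L^2_x$-factor to $L^6_x$ by Bernstein, $\|P_kh\|_{L^6_x}\lec2^k\|P_kh\|_{L^2_x}$, keeping $\|P_k\cN\|_{L^2_x}$, resp.\ $\|P_k\cU\|_{L^2_x}$, as an $\ell^2_k$-sequence; and move the low factor $P_{\le k-k_\alpha}\cU\in L^6_x$ to $L^{r_2}_x$ with $1/r_2$ near $1/12$, chosen so that $1/r=1/q(\e)$ (resp.\ $1/q(-\e)$), which after summing the $k_2\le k-k_\alpha$ pieces by Cauchy--Schwarz costs only $2^{\theta k}$ with $\theta=3(1/6-1/r_2)>0$ and contributes the $k$-independent factor $\|\cU\|_{L^6_x}$ (the value of $k_\alpha$ is immaterial here). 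Multiplying by the Besov weight ($2^{(1/4+\e)k}$ resp.\ $2^{(2/3)k}$ on the $P_{<0}$, $P_{\ge0}$ parts of the first estimate; $2^{-(1/4+\e)k}$ throughout the second), one finds that the net power of $2^k$ accumulated on the $L^2_x$-sequence is, separately for $k<0$ and $k\ge0$, such that the dyadic $\ell^2_k$-sum converges to $\|\cN\|_{L^2_x}$ in the first case and to $\|\cU\|_{\dot H^1_x}\le\|\cU\|_{H^1_x}$ in the second. One then pairs the remaining $L^2_x$-norm (carried in $L^\I_t$: $\cN\in L^2_x$, resp.\ $\cU\in H^1_x$) against the $L^2_tL^6_x$ factor by H\"older in $t$.

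\emph{Step 3 (the tight spot).} The step to do carefully is the high-frequency part $P_{\ge0}$ of the second estimate: there the symbol contributes only $O(1)$, the Besov weight is $2^{-(1/4+\e)k}$, the low-factor Bernstein gives $2^{(1/4+\e)k}$, and the net power left on $\|P_k\cU\|_{L^2_x}$ comes out to be exactly $2^k$ — i.e.\ it reassembles $\|\cU\|_{\dot H^1_x}$ and nothing more, so this part sits at the endpoint and admits no loss at any stage (in particular one may not first bound $\|P_k\cU\|_{L^2_x}$ by $\|\cU\|_{L^2_x}$ and must keep the $\ell^2_k$-sum intact). The complementary part $P_{<0}$ of the second estimate, and both parts of the first estimate, come with a strictly favourable power of $2^k$ and close with room to spare. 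I therefore expect the only real work to be organising Steps 1--2 — the resonance lemma and Lemma 3.5 of \cite{GN} supply all the analysis — with the $P_{\ge0}$ half of the second estimate as the place where the budget is saturated.
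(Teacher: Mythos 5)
Your proposal is correct and runs on the same engine as the paper's proof: the resonance lemma giving $|\om|,|\ti\om|\sim_\alpha|\x|$ on $\supp\cP_{XL}$ (resp.\ $\cP_{XL+LX}$), the Coifman--Meyer type bilinear multiplier bound from Lemma 3.5 of \cite{GN} applied to the truncated symbols, a dyadic decomposition in the output frequency, Bernstein on the low-frequency factor, and an $\ell^2_k$ summation; you also correctly identify the $P_{\ge0}$ piece of $D\ti\Om(\cU,\cU)$ as the part that saturates at $\|\cU\|_{\dot H^1_x}$ with no room to spare. The only real difference is one of bookkeeping: the paper first passes by Sobolev embedding to $L^2$-based norms (reducing to $\|D\Om(\cN,\cU)\|_{L^2_x}$, $\|\LR D^{5/12-\e}\Om(\cN,\cU)\|_{L^2_x}$ and $\|D^{3/2}\ti\Om(\cU,\cU)\|_{L^2_x}$) and then only needs the $L^2\times L^\I\to L^2$ form of the multiplier bound, with Bernstein $L^6\to L^\I$ applied to the low factor; you instead work directly at the $L^{q(\pm\e)}_x$ level with a H\"older split into roughly $L^6\times L^{12}$ and Bernstein on both factors. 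The two routes carry out the same cancellations and close at the same endpoint; the paper's embedding step is marginally cleaner because it only ever invokes the $L^2\times L^\I\to L^2$ case of the bilinear estimate, while your version needs the slightly more general $L^{r_1}\times L^{r_2}\to L^r$ form (which \cite{GN} Lemma 3.5 does supply).
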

\begin{proof}
For the first inequality, it suffices to prove
\begin{align*}\|\Om(\cN,\cU)\|_{\dot B^{1/4+\e}_{q(\e)}|B^{-1/3}_{q(\e)}}
  \lec \|\cN\|_{L^2_x}\|\cU\|_{L_x^6}.
\end{align*}
By Sobolev embedding, we get
\begin{align*}
\|\Om(\cN,\cU)\|_{\dot B^{1/4+\e}_{q(\e)}}
\lesssim &
\|D\Om(\cN,\cU)\|_{L_x^2}.
\end{align*}
It is easy to
see that $D\Om(\cN,\cU)$ is a bilinear multiplier with the symbol
\[m(\xi,\eta)=\frac{|\xi+\eta| \sum\chi_{\leq k-5}(\eta)\chi_k(\xi)}{-\LR{\xi+\eta}+\alpha
|\xi|+\LR{\eta}},\] and $m$ satisfies the condition in Lemma 3.5 in
\cite{GN}. Thus applying dyadic decomposition and Bernstein
inequality, we get
\begin{align*}
\|P_{<0}D\Om(\cN,\cU)\|_{L_x^2}
\les& (\sum_{k_2<2}\norm{\sum_{k_1\leq k_2-k_\alpha}D\Om(P_{k_2}\cN,P_{k_1}\cU)}_{L_x^2}^2)^{1/2}\\
\les& (\sum_{k_2<2}(\sum_{k_1\leq
k_2-k_\alpha}\norm{P_{k_2}\cN}_{L_x^2}\norm{P_{k_1}\cU}_{L_x^\I})^2)^{1/2}\\
\les& (\sum_{k_2<2}(\sum_{k_1\leq
k_2-k_\alpha}(2^{k_1})^{\frac{1}{2}}\norm{P_{k_2}\cN}_{L_x^2}\norm{P_{k_1}\cU}_{L_x^6})^2)^{1/2}
\les \|\cN\|_{L^2_x}\|\cU\|_{L^6_x}.
\end{align*}
Similarly,
\begin{align*}
\|P_{\geq 0}\Om(\cN,\cU)\|_{B^{-1/3}_{q(\e)}}\lec &\|P_{\geq 0}\LR{D}^{\frac{5}{12}-\e}\Om(\cN,\cU)\|_{L_x^2}\\
\les& (\sum_{k_2\geq-2}\norm{\sum_{k_1\leq k_2-k_\alpha}\LR{D}^{\frac{5}{12}-\e}\Om(P_{k_2}\cN,P_{k_1}\cU)}_{L_x^2}^2)^{1/2}\\
\les& (\sum_{k_2\geq-2}(\sum_{k_1\leq
k_2-k_\alpha}\LR{2^{k_2}}^{-\frac{7}{12}-\e}\norm{P_{k_2}\cN}_{L_x^2}\norm{P_{k_1}\cU}_{L_x^\I})^2)^{1/2}\\
\les& (\sum_{k_2\geq-2}(\sum_{k_1\leq
k_2-k_\alpha}(2^{k_1})^{\frac{1}{2}}\LR{2^{k_2}}^{-\frac{7}{12}-\e}\norm{P_{k_2}\cN}_{L_x^2}\norm{P_{k_1}\cU}_{L_x^6})^2)^{1/2}
\\
\les&
\|\cN\|_{L^2_x}\|\cU\|_{L^6_x}.
\end{align*}
We proved the desired result.

Similarly, for the second inequality, by Sobolev embedding we get
\[\|D\tilde\Om(\cU,\cU)\|_{\dot B^{-1/4-\e}_{q(-\e)}}
  \lec \|{D}^{3/2}\tilde\Om(\cU,\cU)\|_{L_x^2}\]
and $D\tilde\Om$ behaves similarly to $D\Om$. Then applying
dyadic decomposition and Bernstein inequality, we get
\begin{align*}
\|D^{3/2}\tilde\Om(\cU,\cU)\|_{L_x^2}\les& (\sum_{k_2}\norm{\sum_{k_1\leq
k_2-k_\alpha}D^{3/2}\tilde\Om(P_{k_2}\cU,P_{k_1}\cU)}_{L_x^2}^2)^{1/2}\\
\les& (\sum_{k_2}(\sum_{k_1\leq
k_2-k_\alpha}2^{k_2/2}\norm{P_{k_2}\cU}_{L_x^2}\norm{P_{k_1}\cU}_{L_x^\I})^2)^{1/2}\\
\les& (\sum_{k_2}(\sum_{k_1\leq
k_2-k_\alpha}2^{(k_1+k_2)/2}\norm{P_{k_2}\cU}_{L_x^2}\norm{P_{k_1}\cU}_{L_x^6})^2)^{1/2}
\les \|\cU\|_{H^1_x}\|\cU\|_{L^6_x}.
\end{align*}
Thus we finish the proof of the lemma.
\end{proof}

\subsection{Cubic terms}
Finally, we deal with the cubic terms.
\begin{lem} For any $N$ and $u$ we have
\begin{align*}
\|\jb{D}^{-1}\Om(D|\cU|^2,\cU)\|_{L^1_tH^1_x}\les&
\|\cU\|_{L^2_tL_x^6}^2\|\cU\|_{L^\I_tH^1_x},\\
\|\jb{D}^{-1}\Om(\cN,\jb{D}^{-1}(\cN\cU))\|_{L^2_t(L_x^{6/5}|B^{1+5/6}_{6/5})}\les&
  \|\cN\|^2_{L^\I_tL_x^2}\|\cU\|_{L^2_tL^6_x},\\
 \|D\tilde\Om(\jb{D}^{-1}(\cN\cU),\cU)\|_{L^1_tL^2_x}\les&
 \|\cN\|_{L^\I_tL^2_x}\|\cU\|^2_{L^2_tL^6_x}.
\end{align*}
\end{lem}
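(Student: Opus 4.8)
The plan is to exploit the structure, already set up in the preceding lemmas, that the normal–form multipliers $\Om$ and $\tilde\Om$ are nothing but $D^{-1}$ composed with a Coifman--Meyer operator. Indeed, by the lemma of Section~2 the resonance function obeys $|\om|\sim_\al|\x|$ on $\supp\cP_{XL}$ (and $|\tilde\om_1|\sim_\al|\x|$ on $\supp\cP_{XL+LX}$), so $D\Om$ and $D\tilde\Om$ are bilinear Fourier multipliers of Coifman--Meyer type (Lemma~3.5 of \cite{GN}); in particular they are bounded $L^p_x\times L^q_x\to L^r_x$ whenever $1/r=1/p+1/q$ in the usual range. Equivalently, writing the high-frequency input factor as $D$ applied to a function, $\Om(Dh,g)=\Om_0(h,g)$ and $\tilde\Om(Dh,g)=\tilde\Om_0(h,g)$ with $\Om_0,\tilde\Om_0$ of Coifman--Meyer type. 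The only delicate point is that at low output frequency $\om^{-1}$ genuinely behaves like $|\x|^{-1}$, with no compensating smoothing available; this loss is absorbed, exactly as announced after that lemma, by the endpoint Sobolev embedding $\dot H^1_x\subset L^6_x$, i.e. by $\|D^{-1}\cN\|_{L^6_x}\les\|\cN\|_{L^2_x}$. In all three estimates the time exponents are settled only at the very end, by H\"older in $t$ after peeling off one $L^\I_t$ factor: $L^1_t=L^\I_t\cdot L^2_t\cdot L^2_t$ for the first and third, $L^2_t=L^\I_t\cdot L^\I_t\cdot L^2_t$ for the second.

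For the first estimate, since $\|\jb{D}^{-1}v\|_{H^1_x}=\|v\|_{L^2_x}$ and $\Om(D|\cU|^2,\cU)=\Om_0(|\cU|^2,\cU)$, it suffices to bound $\|\Om_0(|\cU|^2,\cU)\|_{L^1_tL^2_x}$. The Coifman--Meyer bound with $1/2=1/3+1/6$ and $\||\cU|^2\|_{L^3_x}=\|\cU\|_{L^6_x}^2$ gives $\|\Om_0(|\cU|^2,\cU)(t)\|_{L^2_x}\les\|\cU(t)\|_{L^6_x}^3$ for almost every $t$, and $\int_\R\|\cU\|_{L^6_x}^3\,dt\leq\|\cU\|_{L^\I_tL^6_x}\|\cU\|_{L^2_tL^6_x}^2\les\|\cU\|_{L^\I_tH^1_x}\|\cU\|_{L^2_tL^6_x}^2$. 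For the third estimate, H\"older gives $\|\cN\cU\|_{L^{3/2}_x}\les\|\cN\|_{L^2_x}\|\cU\|_{L^6_x}$, hence $\|\jb{D}^{-1}(\cN\cU)\|_{L^3_x}\les\|\cN\|_{L^2_x}\|\cU\|_{L^6_x}$ by the Sobolev mapping $\jb{D}^{-1}\colon L^{3/2}_x\to L^3_x$; since $D\tilde\Om$ is of Coifman--Meyer type (on the $XL$ piece with $\jb{D}^{-1}(\cN\cU)$ the high factor, on the $LX$ piece with $\cU$ the high factor), the Coifman--Meyer bound with $1/2=1/3+1/6$ yields $\|D\tilde\Om(\jb{D}^{-1}(\cN\cU),\cU)(t)\|_{L^2_x}\les\|\cN(t)\|_{L^2_x}\|\cU(t)\|_{L^6_x}^2$, and H\"older in $t$ finishes it.

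The second estimate is the main point. Write $\Om(\cN,\jb{D}^{-1}(\cN\cU))=\Om_0(D^{-1}\cN,\jb{D}^{-1}(\cN\cU))$ with $\Om_0$ of Coifman--Meyer type, and record again $\|\jb{D}^{-1}(\cN\cU)\|_{L^q_x}\les\|\cN\|_{L^2_x}\|\cU\|_{L^6_x}$ for $q\in[3/2,3]$, from $\|\cN\cU\|_{L^{3/2}_x}\les\|\cN\|_{L^2_x}\|\cU\|_{L^6_x}$ and the Bessel-potential mapping $\jb{D}^{-1}\colon L^{3/2}_x\to L^q_x$. For the low-frequency part, $\jb{D}^{-1}P_{<0}$ is bounded on $L^{6/5}_x$, so using $\|D^{-1}\cN\|_{L^6_x}\les\|\cN\|_{L^2_x}$ and the Coifman--Meyer bound $L^6_x\times L^{3/2}_x\to L^{6/5}_x$ we get $\|P_{<0}\jb{D}^{-1}\Om_0(D^{-1}\cN,\jb{D}^{-1}(\cN\cU))(t)\|_{L^{6/5}_x}\les\|\cN(t)\|_{L^2_x}^2\|\cU(t)\|_{L^6_x}$. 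For the high-frequency part we sum dyadically: on the $2^k$ block with $k\geq0$ the outer $\jb{D}^{-1}$ contributes $2^{-k}$ and $D^{-1}\cN$ contributes $2^{-k}$, and the Coifman--Meyer bound $L^2_x\times L^3_x\to L^{6/5}_x$ gives $\|P_k\jb{D}^{-1}\Om_0(D^{-1}\cN,\jb{D}^{-1}(\cN\cU))\|_{L^{6/5}_x}\les 2^{-2k}\|P_k\cN\|_{L^2_x}\|\jb{D}^{-1}(\cN\cU)\|_{L^3_x}$; since $2^{2(1+5/6)k}2^{-4k}=2^{-k/3}$ is bounded for $k\geq0$ and $\sum_k\|P_k\cN\|_{L^2_x}^2\les\|\cN\|_{L^2_x}^2$, summing the $B^{1+5/6}_{6/5}$-norm over $k\geq 0$ yields $\les\|\cN(t)\|_{L^2_x}^2\|\cU(t)\|_{L^6_x}$. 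A final H\"older in $t$ (two copies of $\cN$ in $L^\I_tL^2_x$, one $\cU$ in $L^2_tL^6_x$) completes the proof. I expect the genuine obstacle to be exactly this second estimate: in the low-frequency regime one must route the $|\x|^{-1}$ coming from $\om^{-1}$ through the endpoint Sobolev embedding rather than through any smoothing, while in the high-frequency regime one must simultaneously pay $1+5/6$ derivatives and keep $L^{6/5}$-integrability, which forces the careful exponent bookkeeping above.
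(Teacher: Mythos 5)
Your proof is correct and follows essentially the same strategy as the paper: in both, the key point is that $\Om$, $\tilde\Om$ lose a factor $|\x|^{-1}$ which is cancelled by the $D$ on the high-frequency input, leaving a Coifman--Meyer-type bilinear operator whose inputs are then placed in $L^2$, $L^3$, $L^6$, $L^{3/2}$ via Sobolev/Bernstein, with the same final H\"older in $t$. The only (cosmetic) differences are that the paper runs the argument through explicit dyadic sums with Bernstein rather than invoking the Coifman--Meyer bound as a black box, and in the first estimate it passes through $\||\cU|^2\|_{H^{1/2}}\|\cU\|_{L^6}$ rather than your $\|\cU\|_{L^6}^3$.
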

\begin{proof}
As in the proof of the previous lemma, applying dyadic
decomposition, we get
\begin{align*}
\|\Om(D|\cU|^2,\cU)\|_{L^2_x}\les&(\sum_{k_2}\norm{\sum_{k_1\leq k_2-k_\alpha}\Om(P_{k_2}D|\cU|^2,P_{k_1}\cU)}_{L_x^2}^2)^{1/2}\\
\les& (\sum_{k_2}(\sum_{k_1\leq
k_2-k_\alpha}\norm{P_{k_2}|\cU|^2}_{L_x^2}\norm{P_{k_1}\cU}_{L_x^\I})^2)^{1/2}\\
\les&\||\cU|^2\|_{H^{1/2}_x}\|\cU\|_{L_x^6}
\lesssim\|\cU\|_{H^1_x}\|\cU\|_{L_x^6}^2.
\end{align*}
Similarly, for the second inequality, we have
\begin{align*}
&\|\jb{D}^{5/6}\Om(\cN,\jb{D}^{-1}(\cN\cU))\|_{L^{6/5}_x}\\\les&(\sum_{k_2}\norm{\sum_{k_1\leq k_2-k_\alpha}\jb{D}^{5/6}\Om(P_{k_2}\cN,P_{k_1}\jb{D}^{-1}(\cN\cU))}_{L^{6/5}_x}^2)^{1/2}\\
\les& (\sum_{k_2}(\sum_{k_1\leq
k_2-k_\alpha}2^{-k_2}\LR{2^{k_2}}^{5/6}\LR{2^{k_1}}^{-1}\norm{P_{k_2}\cN}_{L_x^2}\norm{P_{k_1}(\cN\cU)}_{L_x^{3}})^2)^{1/2}\\
\les& (\sum_{k_2}(\sum_{k_1\leq
k_2-k_\alpha}2^{k_1-k_2}\LR{2^{k_2}}^{5/6}\LR{2^{k_1}}^{-1}\norm{P_{k_2}\cN}_{L_x^2}\norm{P_{k_1}(\cN\cU)}_{L_x^{3/2}})^2)^{1/2}\\
\les&\|\cN\|^2_{L_x^2}\|\cU\|_{L_x^{6}},
\end{align*}
and for  the last inequality, we have
\begin{align*}
& \|D\tilde\Om(\jb{D}^{-1}(\cN\cU),\cU)\|_{L^2_x}\\\les&(\sum_{k_2}\norm{\sum_{k_1\leq k_2-k_\alpha}D\tilde\Om(\jb{D}^{-1}(\cN\cU),\cU)}_{L^{2}_x}^2)^{1/2}\\
\les& (\sum_{k_2}(\sum_{k_1\leq
k_2-k_\alpha}2^{k_1/2}\LR{2^{k_2}}^{-1}\norm{P_{k_2}(\cN\cU)}_{L_x^2}\norm{P_{k_1}\cU}_{L_x^{6}})^2)^{1/2}\\
\les& (\sum_{k_2}(\sum_{k_1\leq
k_2-k_\alpha}2^{(k_1+k_2)/2}\LR{2^{k_2}}^{-1}\norm{P_{k_2}(\cN\cU)}_{L_x^{3/2}}\norm{P_{k_1}\cU}_{L_x^{6}})^2)^{1/2}\\
\les& \norm{\cN\cU}_{L_x^{3/2}}\norm{\cU}_{L_x^{6}}
\les\|\cN\|_{L_x^2}\|\cU\|^2_{L_x^{6}}.
\end{align*}
\end{proof}

\section{Proof of Theorem \ref{thm}}

Now we are ready to use the estimates obtained in the previous section to prove Theorem \ref{thm}. For any
$(u_0,u_1,n_0,n_1)\in H^1_r(\R^3)\times L^2_r(\R^3)\times L^2_r(\R^3)\times \dot
H^{-1}_r(\R^3)$, we define an operator $\Phi_{u_0,u_1,n_0,n_1}(\cU,\cN)$ by
the right-hand side of \eqref{int-U}-\eqref{int-N}. Our
resolution space is
\[S_\eta=\{(\cU,\cN): \norm{(\cU,\cN)}_S=\norm{\cU}_{X|Y}+\norm{\cN}_{L^\I_tL^2_x \cap L^2_t\dot B^{-1/4-\e}_{q(-\e),2}}\leq \eta\}\]
endowed with the norm metric $\norm{\cdot}_S$.

We will show that $\Phi_{u_0,u_1,n_0,n_1}:S_\eta\to S_\eta$ is a
contraction mapping, provided that $\eta\ll 1$ and $(u_0,u_1,n_0,n_1)$
are sufficiently small. By the estimates in the previous section, we
have for any $(\cU,\cN)\in S_\eta$
\begin{align*}
\norm{\Phi_{u_0,u_1,n_0,n_1}(\cU,\cN)}_S\les&
\norm{\cU_0}_{H_x^1}+\norm{\cN_0}_{L_x^2}+(\norm{\cU_0}_{H_x^1}+\norm{\cN_0}_{L_x^2})^2\\
&+\norm{(\cU,\cN)}_S^2+\norm{(\cU,\cN)}_S^3\leq \eta
\end{align*}
if $\e_0=\norm{\cU_0}_{H_x^1}+\norm{\cN_0}_{L_x^2}=\norm{u_0}_{H_x^1}+\norm{u_1}_{L_x^2}+\norm{n_0}_{L_x^2}+\norm{n_1}_{\dot
H_x^{-1}}\ll 1$, and we set $\eta=C\e_0$. Similarly, we can prove
$\Phi_{u_0,u_1,n_0,n_1}:S_\eta\to S_\eta$ is a contraction mapping. Our
estimates are time global, therefore Theorem \ref{thm} follows
immediately.

\subsection*{Acknowledgment}
Z. Guo is supported in part by NNSF of China (No. 11001003) and RFDP
of China (No. 20110001120111).
S. Wang is supported by China Scholarship Council.


\begin{thebibliography}{99}
\bibitem{B} P. M. Bellan, Fundamentals of plasmas physics, Cambridge University Press, 2006.

\bibitem{CL}{Y. Cho and S. Lee, Strichartz Estimates in Spherical Coordinates, to appear in Indi.
Univ. Math. J., arXiv:1202.3543v2.}
\bibitem{D} R. O. Dency, Plasma dynamics, Oxford University Press, 1990.

\bibitem{GGZ} Z. Gan, B. Guo and J. Zhang, Instability of standing wave, global existence and blowup for the Klein-Gordon-Zakharov system with different-degree nonlinearities, J. Diff. Equ., 246(10)(2009), 4097-4128.

\bibitem{GZ}  Z. Gan and  J. Zhang, Instability of standing waves
for Klein-Gordon-Zakharov equations
with different propagation speeds in three space dimensions, J. Math. Anal. Appl. 307(2005), 219-231.




\bibitem{GN} Z. Guo and K. Nakanishi,  Small energy scattering for the Zakharov system with radial symmetry, arXiv:1203.3959v1 [math.AP].

\bibitem{GuoWang} Z. Guo and Y. Wang, Improved Strichartz estimates for a class of dispersive equations in the radial case and their applications to nonlinear Schr\"odinger and wave equation, arXiv:1007.4299.

\bibitem{K}  Y. Ke, Remark on the Strichartz estimates in the radial case,
 J. Math. Anal. Appl. 387 (2012), 857-861.

\bibitem{KM} S. Klainerman and M. Machedon, Space-time estimates for null forms and the local existence theorem, Comm. Pure Appl. Math., 46(9): 1221-1268, 1993.

\bibitem{L} H. Lindblad and Counterexamples to local existense for semi-linear wave equations. Amer. J. Math. 118(1): 1-16, 1996.


\bibitem{MN1} N. Masmoudi and K. Nakanishi, From the Klein-Gordon-Zakharov system to the nonlinear Schr\"{o}inger equation, J. Hyperbolic Differ. Equ. 2(4)(2005) 975-1008.

\bibitem{MN2} N. Masmoudi and K. Nakanishi, Energy convergence for singular limits of Zakharov type systems, Invent. Math. 172(3)(2008) 535-583.

\bibitem{MN3}   N. Masmoudi and K. Nakanishi,   From the Klein-Gordon-Zakharov system to a singular nonlinear Schr\"{o}inger system, Ann. I. H. Poincar\'{e}-AN 27 (2010) 1073-1096.

\bibitem{OT} M. Ohta and G. Todorova, Strong instability of standing waves for the nonlinear Klein-Gordon
equation and the Klein-Gordon-Zakharov system, SIAM Journal on Mathematical Analysis 38
no. 6 (2007), 1912-1931.

\bibitem{OTT2} T. Ozawa, K. Tsutaya and Y. Tsutsumi, Normal form and global solutions for  the Klein-Gordon-Zakharov equations,
Ann. Inst. Henri Poincar\'{e}, Analyse non lin\'{e}aire 12(1995), 459-503.


\bibitem{OTT} T. Ozawa, K. Tsutaya and Y. Tsutsumi, Well-posedness in energy space for the
Cauchy problem of the Klein-Gordon-Zakharov equations with different propagation
speeds in three space dimensions, Math. Ann. 313(1999), no. 1, 127-140.

\bibitem{Shatah} J. Shatah, Normal forms and quadratic nonlinear Klein-Gordon equations. Comm. Pure Appl. Math. 38 (1985), no. 5, 685-696.

\end{thebibliography}
\end{document}